\documentclass[11pt]{article}
\usepackage{amssymb,amsmath,amsthm,tikz}
\usepackage[normalem]{ulem}
\newtheorem{teorema}{Theorem}[section]
\newtheorem{lema}[teorema]{Lemma}

\newtheorem{ejemplo}[teorema]{Example}
\newtheorem{corolario}[teorema]{Corollary}
\newtheorem{teo}[teorema]{Theorem}
\newtheorem{prop}[teorema]{Proposition}
\newtheorem{remark}[teorema]{Remark}

\newcommand{\supp}{\hbox{\rm supp }}

\newcommand{\I}{\hbox{\rm I}}

\newcommand{\mult}{\hbox{\rm mult }}

\begin{document}

	\title{The topology of the generic polar curve and the Zariski invariant for  branches of genus one}
	\author{{\sc Evelia R.\ Garc\'{\i}a Barroso, Marcelo E. Hernandes} \\ {\sc and M. Fernando Hern\'andez Iglesias}\thanks{ The authors were partially supported by the Spanish grant PID2019-105896GB-I00 funded by MCIN/AEI/10.13039/501100011033. The second-named author was partially supported by CNPq-Brazil. The first-named and third-named authors were supported by  the Direcci\'on de Fomento de la Investigaci\'on at the PUCP grant DFI-2023-PI0983.}
	}
	
	\date{}
	
	\maketitle
	
	\begin{abstract}
		We study, for plane complex branches of genus one, the topological type of its generic polar curve, as a function of the semigroup of values and the Zariski  invariant of the branch. We improve some results given by Casas-Alvero in 2023, since we filter the topological type fixed for the branch by the possible values of Zariski invariants.
	\end{abstract}
	
	\noindent {\it Keywords: Plane branches, Zariski invariant, generic polar curve, topological type \\
		\noindent {2020 AMS Classification: 14H20, 32S10.}}\\
	\section{Introduction}
	Let $C_f:f(x,y)=0$ be a plane curve with $f(x,y)\in \mathbb C\{x,y\}$.
		The curve $C_f$ is called a branch when $f$ is irreducible. The multiplicity of the curve $C_f$, denoted by $n:=\mult(C_f)$, is by definition the order of $f$. The curve $C_f$ is singular if $n>1$, otherwise we say that $C_f$ is smooth.  After a change of coordinates, if necessary, we may assume that $x=0$ is not tangent to $C_f$ at $0$. By Newton Theorem there is $\alpha(x)=\sum_{i\geq n}a_ix^{i/n}\in \mathbb C\{x^{1/n}\}$,  such that $f(x,\alpha(x))=0$. The power series $\alpha(x)$ is called a Newton-Puiseux root of $f(x,y)$ and it determines a Puiseux parametrization of $C_f$ as follows:
	\[
	(x(t), y(t))=\left (t^n,\sum_{i\geq n}a_it^i\right ).
	\]
	
	Now, if $C_f$ is a branch then the set of all Newton-Puiseux roots of $f(x,y)$ is $\{\alpha_{\epsilon}(x)=\sum_{i\geq n}a_i\epsilon^i x^{i/n}\;:\;\epsilon \in \mathbb U_{n}\}$, where  $\mathbb U_{n}$ is the multiplicative group of the $n$th complex roots of unity.
	Let  $\alpha_{\epsilon}(x)=\sum_{i\geq n}a_i\epsilon ^i x^{i/n}$ be a Newton-Puiseux root of $C_f$. We associate with it two sequences $(e_{i})_{i}$ and $(\beta_{i})_{i}$ of natural numbers as follows: $e_{0}:=\beta_{0}:=n$; if $e_{k}> 1$ \label{page1}
	then $\beta_{k+1}:=\min\{i\,:\, a_i\neq 0,\, \gcd(e_k,i)<e_k\}$ and 
	$e_{k+1}:=\gcd(e_k, \beta_{k+1})$. 
	The sequence $(e_{i})_{i}$ is strictly decreasing and for some $g \in \mathbb N$ we have $e_g=1$. The number $g$ is the genus of $C_f$. The sequence $(\beta_{0},\beta_{1},\ldots,\beta_{g})$ is called the sequence of characteristic exponents of $\alpha(x)$ and coincides with the sequence of characteristic exponents of any other Newton-Puiseux root of $C_f$, hence this sequence is also called the sequence of characteristic exponents of the branch $C_f$.
	
	Two plane curves $C_{f}$ and $C_{h}$ are topologically equivalent (also called equisingular) if and only if there is a homeomorphism $\Psi:U\longrightarrow V$ where $U$ and $V$ are neighborhoods of the origin at $\mathbb C^2$ such that $f$ (respectively $h$) is convergent in $U$ (respectively in $V$) and $\Psi(C_f\cap U)=C_h\cap V$. It is well known (see \cite{zariski-equising}) that two branches $C_{f}$ and $C_{h}$ are topologically equivalent if and only if they have the same characteristic exponents and in such a case we will write $C_{f}\equiv C_{h}$. If the map $\Psi$ is an analytic isomorphism we say that $C_f$ and $C_h$ are analytic equivalent. We will denote by 	$K( 	n,\beta_1,\beta_2,\ldots,\beta_{g})$
	the set of equisingular branches of characteristic exponents $(n,\beta_1,\ldots,\beta_{g})$. If $C_f:f(x,y)=0$ is a branch in $K(n,\beta_1,\ldots,\beta_{g})$, then 
	we will put $f\in K(n,\beta_1,\ldots,\beta_{g})$.
	
	We associate to the branch  $C_{f}$ the set 
	\[
	\Gamma_f:=\{\I(f,h)\;:\; h\in \mathbb C\{x,y\},\;f \,\hbox{\rm does not divide }h\}
	\]
	\noindent where  $\I(f,h)=\dim_{\mathbb C}\mathbb C\{x,y\}/(f,h)$ is the intersection multiplicity of the curves $C_f$ and $C_h$ at the origin. 
	It is well-known that $\Gamma_f$ is a numerical semigroup called the semigroup of values of  $C_f$.  If $f\in K(n,\beta_1,\ldots, \beta_g)$ then the semigroup $\Gamma_f$ is finitely generated by $g+1$ natural numbers $v_0<v_1<\cdots <v_g$ and  there is a relationship between the sequence $\{v_i\}_{i=0}^g$ and the characteristic exponents $(n=\beta_0,\beta_1,\ldots,\beta_{g})$ of $C_f$ as follows:
	\begin{enumerate}
		\item $v_0=n=\beta_0$, $v_1=\beta_1$,
		\item $v_j=n_{j-1}v_{j-1}+\beta_j-\beta_{j-1}$ for $2\leq j\leq g$, where $n_{j-1}:=\frac{e_{j-2}}{e_{j-1}}$.
	\end{enumerate}
	We can then write
	$\Gamma_f=\langle v_0,\ldots ,v_g\rangle:=\mathbb N v_0+\mathbb Nv_1+\cdots +\mathbb N v_g$.
	
	Since the minimal system of generators of $\Gamma_f$ is equivalent to the set of characteristic exponents of the branch $C_f$, two branches $C_f$ and $C_h$ are topologically equivalent if and only if $\Gamma_f=\Gamma_h$.
	
	Let us now consider two reduced curves $C_f$ and $C_h$ where $f=f_1\cdots f_r$ (respectively $h=h_1\cdots h_s$) is  the decomposition of $f$ (respectively of $h$) into irreducible factors. The curves $C_f$ and $C_h$ are topologically equivalent if and only if $r=s$ and there is a bijection $\sigma:\{C_{f_i}\}_{i=1}^r \longrightarrow \{C_{h_i}\}_{i=1}^r$ such that $\sigma(C_{f_i})=C_{h_i}$ (at the cost of renumbering the branches of $C_h$), for all $i\in \{1,\ldots,r\}$ the branches $C_{f_i}$ and $C_{h_i}$ are topologically equivalent and for all $i,j$ with $1\leq i,j\leq r$, $\I(f_i,f_j)=\I(h_i,h_j)$. 
	
	The polar curve of $C_f$ with respect to a point $(a:b)$ of the complex projective line $\mathbb P^1(\mathbb C)$ is the curve $P_{(a:b)}(f): af_x+bf_y=0$. There exists an open
	Zariski set $U$ of $\mathbb P^1(\mathbb C)$ such that $\{P_{(a:b)}(f)\;:\; (a : b) \in U\}$ is a
	family of topologically equivalent plane curves. Any element of this set is called generic polar
	curve of  $C_f$ and we will denote  it by $P(f)$. It is well known that the topological type of $P(f)$ depends on the analytical type of $C_f$ (see \cite[Exemple 3]{Pham}). 
	
	In \cite{zariski-torsion}, Zariski introduces an analytical invariant for $C_f$ called the Zariski invariant (see Section \ref{Zariski combinatory}). The aim of this work is to study the behaviour of the topological type of the generic polar curve with respect to the Zariski invariants for branches of genus one.
	
	Casas, in \cite{Casas}, presents results considering some analytic invariants of $C_f$ that influence on the topological type of the generic polar curve $P(f)$ for a plane curve $C_f$ in $K(n,m)$.
	In Section 2 we recall the concept of a non-degenerate curve. In Section 3 we study the so-called Zariski invariant. In Section \ref{Polars} we study the topological type of the generic polar curves of plane branches in $\mathcal{L}(n,m,\lambda)$ that is branches in 
	$K(n,m)$ with a fixed Zariski invariant $\lambda$. This provides a finner approach to understanding the behaviour of the topological type of $P(f)$ in terms of the Zariski invariant of $C_f$.
	
	\section{Topological type for a non-degenerate curve}\label{nondegerated}
	
	Let us remember the notion of the Newton polygon of a curve. Let $S\subseteq \mathbb N^2$. The Newton diagram of $S$, denoted by $\mathcal{ND}(S)$, is by definition the convex hull of $S+(\mathbb R_{\geq 0})^2$, where $+$ denotes the Minkowski sum. The Newton polygon $\mathcal {NP}(S)$ of $S$ is the compact polygonal boundary of $\mathcal {ND}(S)$. 
	
	The support of any power series $f(x,y)=\sum_{i,j} a_{i,j}x^iy^j\in \mathbb C\{x,y\}\setminus\{0\}$ is $\supp(f):=\{(i,j)\in \mathbb N^2\;:\;a_{i,j}\neq 0\}$. The Newton diagram of $f$, denoted by $\mathcal{ND}(f)$, is $\mathcal{ND}(\supp(f))$. The Newton
	polygon $\mathcal {NP}(f)$ of $f$ is the Newton polygon of  $\supp(f)$. The
	Newton diagram of $f$ depends on coordinates but $\mathcal {ND}(uf)=\mathcal {ND}(f)$ for any unit $u\in \mathbb C\{x,y\}$. Hence the Newton diagram of a curve is by definition the Newton diagram of any of its equations.
	
	Let $L$ be a compact edge of $\mathcal {NP}(f)$. Denote by $\vert L\vert_1$ (respectively $\vert L\vert_2$) the  length of the projection of $L$ over the horizontal (respectively vertical) axis. The inclination of $L$ is $i_L:=\frac{\vert L\vert_1}{\vert L\vert_2}$.
	By \cite[Lemme 8.4.2]{Chenciner}, if $L$ is a compact edge of $\mathcal {NP}(f)$  then the curve $C_f$ has $\vert L\vert_2$ Newton–Puiseux roots of order $i_L$.
	
	On the other hand, if $f(x,y)=\sum_{i,j}a_{i,j}x^iy^j\in \mathbb C\{x,y\}\setminus\{0\}$ without multiple irreducible factors, then we associate with any compact edge $L$ of $\mathcal {NP}(f)$ the polynomial $f_L(x,y)=\sum_{(i,j)\in L \cap \supp(f)}a_{i,j}x^iy^j\in \mathbb C[x,y]$. We say that $f$ is Newton non-degenerate with respect to the coordinates $(x,y)$ if for any compact edge $L$ of $\mathcal {NP}(f)$, the polynomial $f_L(x,y)$ has no critical points outside the axes $x=0$ and $y=0$, which is equivalent to the non-zero roots of the $y$-polynomial $f_L(1,y)$ are simple. 
	According to Oka, we have
	
	\begin{prop}(\cite[Proposition 4.7]{Oka})\label{Oka}
		Let $C_f$ be a plane curve without multiple irreducible components and consider a coordinate system $(x,y)$ such that $x=0$ is not tangent to $C_f$. If $C_f$
		is non-degenerate with respect to $(x,y)$, then associated with any compact edge $L$ of $\mathcal {NP}(f)$  there are $d_L =\gcd(\vert L \vert_1, \vert L \vert_2)$ branches 
		$\{C_{f^{(L)}_i}\}_{i=1}^{d_L}$ of 
		$C_f$ with $\mult(C_{f^{(L)}_i})=\frac{\vert L \vert_2}{d_L}$  
		for any $i$ and if $C_{f^{(L)}_i}$ is singular then its semigroup of values is  $\Gamma_{f^{(L)}_i}=\left \langle \frac{\vert L \vert_2}{d_L}, \frac{\vert L \vert_1}{d_L}\right \rangle$. Moreover for any two compact edges $L,L'$ (not necessarily different) of $\mathcal {NP}(f)$ and $f_i^{(L)}\neq f_j^{(L')}$ we get $\I(f^{(L)}_i, f^{(L')}_j)=\min\left \{ \frac{\vert L\vert _1 \vert L'\vert _2}{d_Ld_{L'}}, \frac{\vert L\vert _2 \vert L'\vert _1}{d_Ld_{L'}}\right \}$.
	\end{prop}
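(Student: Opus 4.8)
The plan is to organise the Newton--Puiseux roots of $f$ according to the compact edges of $\mathcal{NP}(f)$, to factor the edge polynomials using the non-degeneracy hypothesis, and then to lift each factor to a genuine branch. Since $x=0$ is not tangent to $C_f$, after Weierstrass preparation I may assume that $f\in \mathbb C\{x\}[y]$ is a Weierstrass polynomial of degree $n=\mult(C_f)$, so that its $n$ Newton--Puiseux roots account for every branch. Fix a compact edge $L$ and write its inclination in lowest terms as $i_L=\frac{\vert L\vert_1}{\vert L\vert_2}=\frac{p}{q}$, so that $q=\frac{\vert L\vert_2}{d_L}$, $p=\frac{\vert L\vert_1}{d_L}$ and $\gcd(p,q)=1$. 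By the cited result of Chenciner there are exactly $\vert L\vert_2=d_Lq$ roots of order $p/q$ attached to $L$.

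The first step is to describe $f_L$ explicitly. The lattice points of $L\cap\supp(f)$ are $(a_1+kp,\,b_1-kq)$ for $k=0,\dots,d_L$, where $(a_1,b_1)$ and $(a_1+\vert L\vert_1,b_1-\vert L\vert_2)$ are the endpoints; hence
\[
f_L(x,y)=x^{a_1}y^{b_2}\sum_{k=0}^{d_L}a_k\,(x^p)^{k}(y^q)^{d_L-k}=a_0\,x^{a_1}y^{b_2}\prod_{l=1}^{d_L}\bigl(y^q-c_l\,x^p\bigr),
\]
with $b_2=b_1-\vert L\vert_2$ and $a_0,a_{d_L}\neq 0$ because the endpoints are vertices. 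The non-degeneracy of $C_f$ along $L$ says precisely that the non-zero roots of $f_L(1,y)$ are simple; since $a_{d_L}\neq 0$ forces $c_l\neq 0$, this is equivalent to the $c_l$ being pairwise distinct. I would record that each factor $y^q-c_lx^p$ is irreducible (as $\gcd(p,q)=1$) and parametrises a branch of multiplicity $q$ whose semigroup is $\langle q,p\rangle$ when $q>1$, and which is smooth when $q=1$.

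The heart of the argument is the lifting step: I claim each simple value $c_l$ is the leading datum of exactly one branch $C_{f^{(L)}_l}$ of $C_f$ of multiplicity $q$. Running the Newton--Puiseux algorithm, a root of order $p/q$ attached to $L$ has the form $\gamma x^{p/q}+(\text{higher order})$ with $\gamma^q=c_l$ for some $l$; the $q$ choices $\gamma=c_l^{1/q}\zeta$, $\zeta^q=1$, form a single orbit under $x^{1/q}\mapsto\zeta x^{1/q}$ and hence assemble into one irreducible branch. Because $c_l$ is a simple root, $\partial_y f_L(1,\gamma)\neq0$, so a Hensel/implicit-function continuation extends the leading term uniquely and introduces no further characteristic exponent; thus the branch has the single characteristic pair $(q,p)$ (or is smooth if $q=1$) and semigroup $\langle q,p\rangle$ as asserted. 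The $d_L$ distinct values $c_l$ give $d_L$ branches contributing $d_Lq=\vert L\vert_2$ roots, and summing $\vert L\vert_2$ over all compact edges telescopes to $n$, so every branch of $C_f$ arises this way. I expect this step --- proving that simplicity of $c_l$ forbids both the splitting of the orbit into several branches and the appearance of higher characteristic exponents --- to be the \emph{main obstacle}, and the place where the non-degeneracy hypothesis is genuinely used.

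Finally, for the intersection numbers I would use the parametrisation formula $\I(f^{(L)}_i,f^{(L')}_j)=\ord_s f^{(L)}_i\bigl(s^{q'},\beta(s)\bigr)=\sum_{\zeta^q=1}\ord_s\bigl(\beta(s)-\alpha_\zeta(s^{q'})\bigr)$, where $C_{f^{(L')}_j}$ is parametrised by $x=s^{q'}$, $y=\beta(s)={c'}^{1/q'}s^{p'}+\cdots$, and the $\alpha_\zeta$ are the $q$ conjugate roots of $C_{f^{(L)}_i}$, with leading terms $c^{1/q}\zeta\,s^{pq'/q}+\cdots$. If the inclinations differ ($pq'\neq p'q$) the leading exponents never coincide, so each summand equals $\min(p',pq'/q)$ and $\I=q\min(p',pq'/q)=\min(pq',p'q)$. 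If the inclinations agree then $L=L'$ (distinct edges of a Newton polygon have distinct slopes), so $p=p'$, $q=q'$, and $f^{(L)}_i\neq f^{(L')}_j$ forces $c\neq c'$; then $c^{1/q}\zeta\neq {c'}^{1/q}$ for every $\zeta$, no cancellation occurs, each summand equals $p$, and $\I=qp=\min(pq',p'q)$. Rewriting $pq'=\frac{\vert L\vert_1\vert L'\vert_2}{d_Ld_{L'}}$ and $p'q=\frac{\vert L\vert_2\vert L'\vert_1}{d_Ld_{L'}}$ yields the stated formula in every case.
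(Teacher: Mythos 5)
The paper offers no proof of this proposition: it is quoted from Oka's book (Proposition 4.7), where it comes out of the general theory of non-degenerate singularities and their resolution, so there is no internal argument to compare yours against. Your blind proof is a correct, essentially self-contained alternative along classical Newton--Puiseux lines. The factorization $f_L=a_0x^{a_1}y^{b_2}\prod_{l}(y^q-c_lx^p)$ with the $c_l$ nonzero and pairwise distinct is exactly what the paper's formulation of non-degeneracy (simplicity of the nonzero roots of $f_L(1,y)$) provides; the passage from a simple $c_l$ to a single branch with parametrization $(t^q,\gamma t^p+\cdots)$, hence multiplicity $q=\vert L\vert_2/d_L$ and semigroup $\langle q,p\rangle$ when $q>1$, is the standard consequence of the fact that a simple root of the edge polynomial continues uniquely inside $\mathbb C\{x^{1/q}\}$ with no new characteristic exponents; the count $\sum_L\vert L\vert_2=n$ shows all branches are accounted for; and your computation of $\I(f_i^{(L)},f_j^{(L')})$ by summing contact orders of conjugate roots correctly yields $\min\{pq',p'q\}$ in both the distinct-slope and equal-slope ($L=L'$, $c\neq c'$) cases. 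Two presentational wrinkles, neither a substantive gap: (i) you assert that the $q$ coefficients $\gamma$ with $\gamma^q=c_l$ ``assemble into one irreducible branch'' \emph{before} establishing that the corresponding root lies in $\mathbb C\{x^{1/q}\}$, whereas logically the Hensel/height-one continuation must come first --- it is what rules out these leading terms belonging to roots with higher ramification; (ii) the equality $\mult(C_{f_i^{(L)}})=\vert L\vert_2/d_L$ rather than $\min(\vert L\vert_1,\vert L\vert_2)/d_L$ uses the hypothesis that $x=0$ is not tangent to $C_f$, i.e.\ $i_L\geq 1$ for every compact edge, which you rely on implicitly but never invoke. What your route buys is transparency and elementarity (everything reduces to the Newton--Puiseux algorithm and order counts); what the citation to Oka buys the authors is generality, since Oka's framework handles the non-degenerate situation uniformly via the dual Newton diagram without tracking individual Puiseux roots.
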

	
	\section{The Zariski invariant}\label{Zariski combinatory}
	
	Let $f(x,y)\in \mathbb C\{x,y\}$ irreducible and consider the plane branch $C:f(x,y)=0$. Suppose that  $f\in K(n,m)$. In particular, the semigroup associated to $C_f$ is $\Gamma_f=\langle n,m\rangle$.
	
	Up to an analytic change of coordinates we can suppose that $f\in\mathbb{C}\{x\}[y]$ is a Weierstrass polynomial with $\deg_y(f)=\mult(C_f)=n$.
	
	Zariski proved in \cite[pages 785-786]{zariski-torsion} and \cite[Chapitre III.1]{Zariski-libro} that if $f\in K(n,m)$ then there exists a change of coordinates such that $C_f$ is analytically equivalent to a plane branch with Puiseux parametrization $(t^n,t^m)$ or 
	\[\left(t^n, t^m+t^{\lambda_f}+ \sum_{i>\lambda_f}a_it^i\right ),\]
	where $\lambda_f+n\not\in\Gamma_f$ is an analytical invariant called the Zariski invariant of $C_f$.  In particular $\lambda_f>m$. If $C_f$ is analytically equivalent to $y^{n}-x^{m}=0$ then we put $\lambda_f=\infty$. 
	
	Any branch $C_f$ such that $n=2$ or $f\in K(3,m)$ with $m\in\{4,5\}$ has Zariski invariant equal to $\lambda_f=\infty$ (see \cite[Remark 1.2.10]{handbook}). So, in what follows we consider branches $C_f$ with multiplicity $n> 3$ or $f\in K(3,m)$ with $m\geq 7$.
	
	According to the definition we get that the set of possible finite Zariski invariants of branches in $K(n,m)$ is
	\[
	\mathcal{Z}(n,m):=\{\lambda\in\mathbb{N}\;:\; \lambda>m\ \mbox{and}\ \lambda+n\not\in\langle n,m\rangle\}.
	\]
	
	\begin{lema}\label{landas}
		Any  $\lambda\in \mathcal{Z}(n,m)$ can be uniquely expressed by 
		\[\lambda=i m-jn\ \ \mbox{for}\ \ 2\leq\ i\leq n-1\ , \ 2\leq\ j < \frac{(i-1)m}{n}, \, \text{and} \,\, (i, j) \in \mathbb{N}^2. \]
	\end{lema}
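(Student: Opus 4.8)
The plan is to exploit that $\gcd(n,m)=1$, which holds because $(n,m)$ is the characteristic sequence of a genus-one branch (so $e_1=\gcd(n,m)=1$). Then $m$ is invertible modulo $n$, which yields for \emph{every} integer $\lambda$ a unique pair $(i,j)$ with $0\le i\le n-1$, $j\in\mathbb Z$ and $\lambda=im-jn$: indeed $i$ is forced to be the unique residue in $\{0,\dots,n-1\}$ with $im\equiv\lambda\pmod n$, and then $j=(im-\lambda)/n$ is determined. So the whole content of the statement is to show that, for $\lambda\in\mathcal Z(n,m)$, this canonical pair satisfies $i\ge 2$ and $2\le j<\frac{(i-1)m}{n}$; uniqueness is then immediate, since $\{2,\dots,n-1\}\subseteq\{0,\dots,n-1\}$ and any two representations in the admissible range share the same residue $i$ modulo $n$, hence the same $j$.

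Next I would record a membership criterion for $\langle n,m\rangle$. Using that the Apéry set of $\langle n,m\rangle$ with respect to $n$ is $\{0,m,2m,\dots,(n-1)m\}$ (these being the least semigroup elements in each residue class modulo $n$), an integer $s$ written canonically as $s=im-jn$ with $0\le i\le n-1$ lies in $\langle n,m\rangle$ if and only if $j\le 0$. This follows in one line: $im$ is the smallest element of $\langle n,m\rangle$ congruent to $s$ modulo $n$, and $s-im=-jn$, so $s\in\langle n,m\rangle$ exactly when $-jn\ge 0$.

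Then I would translate the two defining conditions of $\mathcal Z(n,m)$. Writing $\lambda=im-jn$ canonically, the condition $\lambda+n\notin\langle n,m\rangle$ concerns $\lambda+n=im-(j-1)n$, whose canonical pair is $(i,j-1)$; by the membership criterion this is equivalent to $j-1\ge 1$, that is $j\ge 2$. The condition $\lambda>m$ reads $im-jn>m$, i.e. $(i-1)m-jn>0$, i.e. $j<\frac{(i-1)m}{n}$, which is exactly the stated upper bound. Finally I obtain the range of $i$: from $j\ge 2>0$ and $\lambda>0$ we cannot have $i=0$ (else $\lambda=-jn<0$), and $i=1$ would force $\lambda=m-jn<m$, contradicting $\lambda>m$; hence $i\ge 2$, while $i\le n-1$ is built into the canonical representation.

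I do not expect a genuine obstacle here; the only points demanding a little care are the clean formulation and proof of the membership criterion and the boundary analysis ruling out $i\in\{0,1\}$. It is also worth noting, to justify the strict inequality, that $\frac{(i-1)m}{n}$ is never an integer for $1\le i-1\le n-2$ (as $\gcd(n,m)=1$), so the strict bound $j<\frac{(i-1)m}{n}$ on integers is exactly equivalent to $\lambda>m$.
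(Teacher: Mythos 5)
Your proposal is correct and follows essentially the same route as the paper: both rest on the unique representation $\lambda=im-jn$ with $0\le i\le n-1$ (valid since $\gcd(n,m)=1$), the criterion that such an integer lies in $\langle n,m\rangle$ exactly when $j\le 0$, and the translation of the conditions $\lambda>m$ and $\lambda+n\notin\langle n,m\rangle$ into the stated bounds on $i$ and $j$. The only cosmetic difference is that you read off $j\ge 2$ directly from the canonical pair $(i,j-1)$ of $\lambda+n$, whereas the paper first gets $j\ge 1$ from $\lambda\notin\langle n,m\rangle$ and then excludes $j=1$.
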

	\begin{proof}
		Recall that any $z\in\mathbb{Z}$ can be uniquely represented as	$$ z=s_0n+s_1m\ \ \mbox{with}\ \  0\leq s_1< n\ \ \mbox{and}\ \ s_0\in\mathbb{Z}.$$ Moreover, $z=s_0n+s_1m\in\langle n,m\rangle$ if and only if $s_0\geq 0$.
		
		Since $\lambda+n\not\in \langle n,m \rangle$ we get $\lambda\not\in \langle n,m \rangle$. In this way, there are $s_0,s_1\in \mathbb Z$ such that $\lambda=s_0n+s_1m$ with $s_0< 0$ and $0\leq s_1\leq n-1$. As, $\lambda=s_0n+s_1m>m$ then $s_1\geq 2$ and $\frac{(1-s_1)m}{n}<s_0$. Moreover $s_0\leq -2$ (otherwise $\lambda +n$ belongs to  $\langle n,m \rangle$).
	\end{proof}

	Let $T\subseteq\mathbb{R}^2$ be the triangle determined by the lines
	\[y=n-1,\ \ x=m-1\ \ \mbox{and}\ \ my+nx=nm,\]
	that is, the triangle with vertices $(m-1,n-1), (m-1,\frac{n}{m})$ and $(\frac{m}{n},n-1)$ (see the left side of Figure \ref{fig:triangle2en1}). Denote by
	$\overset{o}{T}$ the set of lattice points in the interior of $T$.

	\begin{figure} 
		\begin{center}
			\begin{tikzpicture}[x=0.45cm,y=0.45cm] 
				
				\begin{scope}[shift={(0,0)}]
					\fill[fill=yellow!40!white] (12/5,4) --(11,4)-- (11,5/12) -- (12/5,4) --cycle;
					\node[draw,circle,inner sep=1.5pt,fill, color=black] at (0,5){};
					\node [left] at (0,5) {$n$};
					\node[draw,circle,inner sep=1.5pt,fill, color=black] at (0,4){};
					\node [left] at (0,4) {$n-1$};
					\node[draw,circle,inner sep=1.5pt,fill, color=black] at (12,0){};
					\node [below] at (12.1,-0.1) {$m$};
					\node[draw,circle,inner sep=1.5pt,fill, color=black] at (11,0){};
					\node [below] at (10.5,0) {$m-1$};
					\node[draw,circle,inner sep=1.5pt,fill, color=black] at (0,5/12){};
					\node [left] at (0,5/12) {$n/m$};
					\node[draw,circle,inner sep=1.5pt,fill, color=black] at (12/5,0){};
					\node [below] at (12/5,0) {$m/n$};
					\node [below] at (8,3.5) {$\overset{o}{T}$};
					\draw[->] (0,0) -- (13,0) node[right,below] {$i$};
					\draw[->] (0,0) -- (0,6) node[above,left] {$j$};
					\draw[-, line width=0.5mm] (0,5) -- (12,0);
					\draw[-, line width=0.5mm] (0,4) -- (12,4);
					\draw[-, line width=0.5mm] (11,0) -- (11,5);
					\draw[-][dashed] (13,5/12) -- (0,5/12);
					\draw[-][dashed] (12/5,0) -- (12/5,5);
				\end{scope}
				
				\begin{scope}[shift={(15,0)}]
					\tikzstyle{every node}=[font=\small]
					\fill[fill=yellow!40!white] (12/5,4) --(11,4)-- (11,5/12) -- (12/5,4) --cycle;
					\foreach \x in {0,...,12}{
						\foreach \y in {0,...,5}{
							\node[draw,circle,inner sep=1pt,fill, color=gray!40] at (1*\x,1*\y) {}; }
					}
					
					\foreach \x in {5,...,10}{
						\foreach \y in {3}{
							\node[draw,circle,inner sep=1pt,fill, color=black] at (1*\x,1*\y) {}; }
					}
					
					\foreach \x in {8,...,10}{
						\foreach \y in {2}{
							\node[draw,circle,inner sep=1pt,fill, color=black] at (1*\x,1*\y) {}; }
					}
					\foreach \y in {1,2,...,5} \draw(0,\y)node[left]{\y};
					\foreach \x in {0,1,...,12} \draw(\x,0)node[below]{\x};
					\node[draw,circle,inner sep=1pt,fill, color=black] at (10,1){};
					\node [above] at (5,3) {13};
					\node [above] at (10,1) {14};
					\node [above] at (8,2) {16};
					\node [above] at (6,3) {18};
					\node [above] at (9,2) {21};
					\node [above] at (7,3) {23};
					\node [above] at (10,2) {26};
					\node [above] at (8,3) {28};
					\node [above] at (9,3) {33};
					\node [above] at (10,3) {38};
					
					\draw[->] (0,0) -- (13,0) node[right,below] {$i$};
					\draw[->] (0,0) -- (0,6) node[above,left] {$j$};
					\draw[-, line width=0.5mm] (0,5) -- (12,0);
					\draw[-, line width=0.5mm] (0,4) -- (12,4);
					\draw[-, line width=0.5mm] (11,0) -- (11,5);
					
				\end{scope}
				
			\end{tikzpicture}
		\end{center}
		\caption{The triangle $T$ and the elements of  $\mathcal{Z}(5,12)$.}  
		\label{fig:triangle2en1}
	\end{figure}
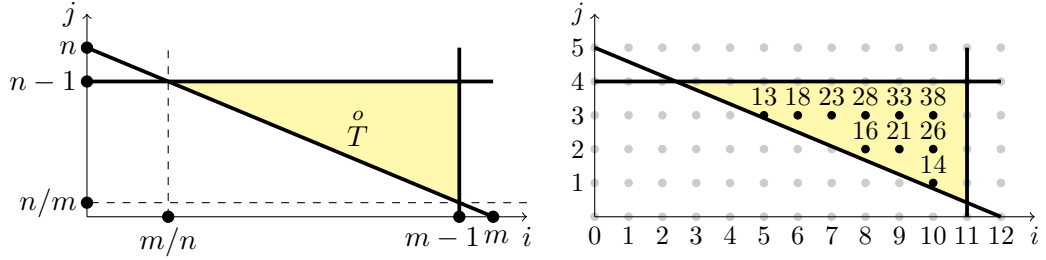

	The following proposition gives the relation between $\mathcal Z(n,m)$ and  $\overset{o}{T}$.
	
	\begin{prop}\label{biyeccion}
		The map 
		\begin{equation}\label{Phi}
			\begin{array}{rll}
				\Phi: \mathcal{Z}(n,m) &\longrightarrow &\overset{o}{T}\\
				\lambda=\alpha m - \beta n& \longrightarrow & \Phi(\alpha m - \beta n)=(m-\beta,\alpha-1),
			\end{array}
		\end{equation}
		\noindent where $\ 2\leq \alpha \leq n-1$ and $2\leq \beta < \frac{(\alpha -1)m}{n}$ is a bijection between the set $\mathcal Z(n,m)$ and the lattice points in the interior of $T$.
	\end{prop}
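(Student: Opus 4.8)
The strategy is to write down the explicit inverse of $\Phi$ and to check that both maps are well defined, the whole verification resting on a single algebraic identity. Writing $\lambda=\alpha m-\beta n$ with $2\le\alpha\le n-1$ and $2\le\beta<\frac{(\alpha-1)m}{n}$ as in Lemma \ref{landas}, and setting $(i,j)=(m-\beta,\alpha-1)$, a direct computation gives $ni+mj-nm=(\alpha-1)m-\beta n=\lambda-m$. Thus the edge condition $ni+mj>nm$ that cuts out the interior of $T$ along the line $my+nx=nm$ is literally the inequality $\lambda>m$, and this observation is the heart of the matter.

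First I would show that $\Phi$ is well defined, i.e.\ that $(i,j)\in\overset{o}{T}$. The three strict inequalities defining the interior translate term by term into the constraints of Lemma \ref{landas}: since $\beta$ is an integer, $i=m-\beta<m-1$ is equivalent to $\beta\ge 2$; likewise $j=\alpha-1<n-1$ is equivalent to $\alpha\le n-1$; and $ni+mj>nm$ is equivalent to $\beta<\frac{(\alpha-1)m}{n}$ by the identity above. Since $\alpha\ge 2$ and $\beta<\frac{(\alpha-1)m}{n}<m$, the coordinates $i,j$ are positive, so $(i,j)$ is genuinely an interior lattice point of $T$.

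Injectivity of $\Phi$ is then immediate, because $(m-\beta,\alpha-1)$ recovers $\alpha$ and $\beta$, and by Lemma \ref{landas} the pair $(\alpha,\beta)$ determines $\lambda$ uniquely. For surjectivity I would exhibit the inverse directly: given $(i,j)\in\overset{o}{T}$, put $\alpha=j+1$ and $\beta=m-i$ and set $\lambda=\alpha m-\beta n$. Running the equivalences of the previous paragraph backwards yields $2\le\alpha\le n-1$ and $2\le\beta<\frac{(\alpha-1)m}{n}$, and the identity gives $\lambda>m$. It then remains to confirm that $\lambda\in\mathcal{Z}(n,m)$, i.e.\ that $\lambda+n\notin\langle n,m\rangle$. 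For this I would invoke the canonical representation used in the proof of Lemma \ref{landas}: as $0\le\alpha<n$, the expression $\lambda=(-\beta)n+\alpha m$ is already in the normal form $s_0n+s_1m$ with $0\le s_1<n$, so $\lambda+n=(1-\beta)n+\alpha m$ has first coefficient $1-\beta\le -1<0$, whence $\lambda+n\notin\langle n,m\rangle$. Therefore $\lambda\in\mathcal{Z}(n,m)$ and $\Phi(\lambda)=(i,j)$, which completes the argument.

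The only genuinely delicate point is this last membership: everything else is a mechanical dictionary between the edges of $T$ and the inequalities of Lemma \ref{landas}, but $\lambda+n\notin\langle n,m\rangle$ has to be read off from the uniqueness of the $s_0n+s_1m$ decomposition rather than by direct inspection. The main thing to keep track of is the integrality of $\alpha$ and $\beta$, so that the strict inequalities between real numbers (the defining inequalities of $\overset{o}{T}$) turn into exactly the right inequalities between integers.
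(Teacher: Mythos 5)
Your proof is correct and follows essentially the same route as the paper's: injectivity via the uniqueness statement of Lemma \ref{landas}, and surjectivity by exhibiting the inverse $(i,j)\mapsto (j+1)m-(m-i)n$ and checking the constraints. The only difference is that you spell out the final membership $\lambda+n\notin\langle n,m\rangle$ via the normal-form decomposition $s_0n+s_1m$, a step the paper's proof asserts with ``consequently, $\lambda\in\mathcal{Z}(n,m)$'' and leaves to the reader.
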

	
	\begin{proof}
		By Lemma \ref{landas} we have that $\Phi$ is an injective map.
		
		On the other hand,  if $(i,j)\in \overset{o}{T}$ then
		$in+jm=(n-1)m+\lambda$ for some $\lambda>m$, $2 \leq i \leq m-2, \ \ 1 \leq j \leq n-2$, so $\lambda=m(j+1)-n(m-i)$.
		Therefore if $m-i=\beta \ \text{and} \ j+1=\alpha$, then $ 2 \leq \alpha \leq n-1$ and $2 \leq \beta < \frac{(\alpha-1)m}{n}$, consequently, $\lambda\in\mathcal{Z}(n,m)$.
		Hence, $\Phi$ is a bijection.
	\end{proof}
	
	In the right side of Figure \ref{fig:triangle2en1} 
	we illustrate the image of bijection $\Phi$ introduced in Proposition \ref{biyeccion} for the case
	$\mathcal Z (5,12)$.
	
	Let us consider in $\overset{o}{T}$ the following $(n,m)$-weighted ordering $\prec$\ : 
	
	If $(x_0,y_0)$, $(x_1,y_1)\in \overset{o}{T}$ then we put
	
	\[(x_0,y_0)\prec (x_1,y_1)\ \hbox{\rm if and only if }\ x_0n+y_0m<x_1n+y_1m.
	\]
	
	In particular, the bijection $\Phi$ preserves orders, that is 
	
	\[\alpha_0m-\beta_0n<\alpha_1m-\beta_1n\ \mbox{if and only if}\ \Phi(\alpha_0m-\beta_0n)\prec \Phi (\alpha_1m-\beta_1n).
	\]
	
	The triangle $T$ was spotted by Zariski in \cite[page 107]{Zariski-libro}.
	
	Let $\lambda \in \mathcal Z(n,m)$ and $\Phi(\lambda)=(p,q)$. We define
	\begin{eqnarray}\label{singularidad}
		{\mathcal I}_{\lambda}: & =&\{ (i,j)\in \overset{o}{T} \;:\;
		(i,j)\succ (p,q)\}\\ 
		& =&\{ (i,j)\in \mathbb{N}^2 \;:\;
		in+jm>pn+qm,\; 0\leq i\leq m-2, \;0\leq j\leq n-2\}\nonumber.
	\end{eqnarray}
	
	In Figure \ref{fig:NPf} we illustrate the set ${\mathcal I}_{\lambda}$. 
	
	\begin{figure}[h!] 
		\begin{center}
			\begin{tikzpicture}[x=0.5cm,y=0.5cm] 
				\tikzstyle{every node}=[font=\small]
				\foreach \x in {0,...,12}{
					\foreach \y in {0,...,5}{
						\node[draw,circle,inner sep=1pt,fill, color=gray!40] at (1*\x,1*\y) {}; }
				}
				\fill[fill=blue!30!white] (3.6,4) -- (11,4) -- (11,11/12) --(3.6,4) --cycle;
				\foreach \x in {7,8,9,10}{
					\foreach \y in {3}{
						\node[draw,circle,inner sep=1pt,fill, color=black] at (1*\x,1*\y) {}; }
				}
				
				\foreach \x in {9,10}{
					\foreach \y in {2}{
						\node[draw,circle,inner sep=1pt,fill, color=black] at (1*\x,1*\y) {}; }
				}	
				\node [above] at (6.2,3) {$(p,q)$};
				\node [left] at (0,5) {$n$};
				\node [left] at (-0.1,4) {$n-1$};
				\node [below] at (12.1,-0.1) {$m$};
				\node [below] at (10.75,0) {$m-1$};
				\draw[->] (0,0) -- (14,0) node[right,below] {$i$};
				\draw[->] (0,0) -- (0,6) node[above,left] {$j$};
				\draw[-, line width=0.5mm] (0,5) -- (12,0);
				\draw[-, line width=0.5mm] (0,4) -- (12,4);
				\draw[-, line width=0.5mm] (11,0) -- (11,5);
				\draw[-][dashed]  (0,5.5) -- (13.2,0);
				\node[draw,circle,inner sep=1.5pt,fill, color=black] at (0,5){};
				\node[draw,circle,inner sep=1.5pt,fill, color=black] at (0,4){};
				\node[draw,circle,inner sep=1.5pt,fill, color=black] at (11,0){};
				\node[draw,circle,inner sep=1.5pt,fill, color=black] at (12,0){};
				\node[draw,circle,inner sep=1.5pt, color=black] at (6,3){};
			\end{tikzpicture}
		\end{center}
		\caption{Points $(i,j)\in \overset{o}{T}$ with $in+jm> pn+qm$.}  
		\label{fig:NPf}	
		
	\end{figure}
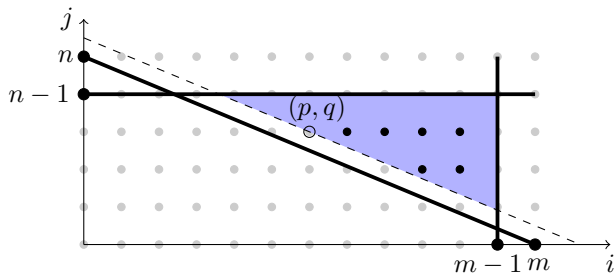
	
	In \cite[Lemma 1.4]{Peraire} Peraire stablished a bijection between $\overset{o}{T}$ and the set $\{s:=z-m\,:\; z\in \mathcal Z(n,m)\}$.
	Moreover, in \cite[Theorem 1.5]{Peraire} Peraire proved that if $f\in K(n,m)$ with Zariski invariant $\lambda_f=m+s$ with $s>0$,  then there are  analytic coordinates $(x,y)$ such that 
	\begin{equation}\label{eqf}
		f(x,y)=y^n-x^m+x^py^q+\sum_{(i,j)\in \mathcal J}
		a_{i,j}x^iy^j,\ \ \mbox{for some}\ \ a_{i,j}\in\mathbb{C}
	\end{equation}
	where $\mathcal J:=\{ (i,j)\in \mathbb{N}^2 \;:\;
	in+jm>mn+s,\;0\leq i\leq m-2, \;0\leq j\leq n-2\}$ and $np+qm=mn+s$. 
	
	\noindent Notice that $np+mq=mn+\lambda_f-m$, that is, $\lambda_f=(q+1)m-(m-p)n$. Since $\lambda_f\in\mathcal{Z}(n,m)$, by (\ref{Phi}), we get $\Phi (\lambda_f)=(p,q)\in \overset{o}{T}$. Moreover, the sets $\mathcal J$ and $\mathcal I_{\lambda_f}$ are equal.
	In this way, we can rewrite (\ref{eqf}) as
	\begin{equation}\label{equationf}
		f(x,y)=y^n-x^m+x^py^q+\sum_{
			(i,j)\in \mathcal{I}_{\lambda}} a_{i,j}x^iy^j.
	\end{equation}

	\begin{remark}\label{p+q}
		Since $f$ is irreducible with order $n$, in (\ref{equationf}) we get: 
		$$n<p+q<m \,\,\,\text{if} \,\,\, \lambda_f<2m-n.$$\\
			 Indeed, we have $\Phi(\lambda_f)=(p,q)$ and $np+mq=(n-1)m+\lambda_f$, so if $\lambda_f<2m-n$ then we get 			$np+mq<(n+1)m-n$ and
			$p+q<\frac{nm+(m-n)(p+1)}{m}$. 
The inequality follows  because $p\leq m-2$ and $n<m$.	
		
	\end{remark}
	
	\section{Topological type of generic polar curves of branches in $\mathcal L(n,m,\lambda)$}
	\label{Polars}
	
	Let $n$ and $m$ be two coprime integers such that $2<n<m$. Fix $\lambda\in\mathcal{Z}(n,m)\cup\{\infty\}$. Denote by $\mathcal L(n,m,\lambda)$ the set of  branches in $K(n,m)$ with Zariski invariant equals $\lambda$.
	
	Any branch in $\mathcal{L}(n,m,\infty)$ is analytically equivalent to the branch
	$y^n-x^m=0$. On the other hand, if $\lambda\in \mathcal{Z}(n,m)$ then any branch $C_f\in\mathcal{L}(n,m,\lambda)$ 
	admits, up to change of analytic coordinates, an equation $f$	as (\ref{equationf}).
	
	Since we are interesting to study the topological type of generic polar curves of 
	$C\in\mathcal L(n,m,\lambda)$ and, as noted in the introduction, it depends on the analytical type of $C$, from now on, for abuse of language, we will write $f\in\mathcal L(n,m,\lambda)$ and when we do so we will 
	assume that $f$ is	as in 
	(\ref{equationf}).
	
	By Lemma \ref{landas} and Proposition \ref{biyeccion}, if $\lambda\in \mathcal Z(n,m)$ then there are $\alpha, \beta \in \mathbb N$ with $2\leq\alpha <n$ and $2\leq\beta<\frac{(\alpha-1)m}{n}$ such that
		$\lambda=\alpha m - \beta n $ and $ \Phi(\lambda)=(m-\beta,\alpha-1)=:(p,q)$,
		where  $\Phi$ was defined in \eqref{Phi}.
		
		By definition, the point $(p,q) \in \overset{o}{T}$
		corresponding to the invariant  $\lambda$ verifies the condition $np+qm=(n-1)m+\lambda>mn$.
		
		We will study, via its Newton diagram, the topological type of the generic polar curve of a branch in $\mathcal L(n,m,\lambda)$.
		
		\subsection{The set $T_\lambda$}\label{Tlambda}
		
		Given $\lambda\in\mathcal{Z}(n,m)$ and $f\in\mathcal{L}(n,m,\lambda)$  as in (\ref{equationf}). Let  us
		consider the generic polar curve $P(f): a f_x +b f_y=0$ of $f$. If 
		
		\[	\Theta_f:=\{(p-1,q)\}\cup\{(i-1,j), (i,j-1)\in\mathbb{N}^2:\ (i,j)\in \mathcal{I}_{\lambda}\cap\supp (f)\}
		\]
  \noindent and
	 \[E_{\lambda}:=\{(0,n-1), (m-1,0), (p,q-1)\}\] 
			then the Newton polygon of the generic polar curve $P(f)$ equals the Newton polygon of $E_{\lambda}\cup \Theta_f$.
			
		Put $N:=(0,n-1), M:=(m-1,0)$, $A:=(p,q)$ and $B:=(p,q-1)$.
		
		If $F, G\in\mathbb{R}^2$, then we denote by $l_{F,G}$ the real line passing by $F$ and $G$. Notice that $l_{N,M}: (n-1)x+(m-1)y=(n-1)(m-1)$ and $l_{N,B}:(n-q)x+py=(n-1)p$ whose slope is $\frac{q-n}{p}>-1$ because $n<p+q$ (see Remark \ref{p+q}).
		
		We say that a point $(\alpha_1,\alpha_2)\in \mathbb R^2$ is above (respectively below) the line $\ell: ax+by=c$ if $a\alpha_1+b\alpha_2>c$ (respectively $a\alpha_1+b\alpha_2<c$). 
		
		Given $H=(h_1,h_2)\in\mathbb{R}^2$ we denote by $l_{\sigma,H}:nx+my=h_1n+h_2m$ the line with slope $\sigma:=-\frac{n}{m}$ and passing through the point $H$. By (\ref{singularidad}), it follows that any $(i,j)\in \mathcal{I}_{\lambda}$ is above $l_{\sigma,A}:nx+my=pn+qm$.
		
		\begin{lema}\label{lado-1}
			Let $f\in \mathcal L(n,m,\lambda)$ with $\Phi(\lambda)=(p,q)$. Any point $(\alpha_1,\alpha_2)\in \Theta_f$ with $q-1<\alpha_2<n-1$ is above the line	
			$l_{N,B}$.
		\end{lema}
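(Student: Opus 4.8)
The plan is to translate ``above $l_{N,B}$'' into an inequality on the first coordinate and compare it, height by height, with the condition defining $\mathcal{I}_\lambda$. Using the equation $l_{N,B}:(n-q)x+py=(n-1)p$, a point $(\alpha_1,\alpha_2)$ lies above $l_{N,B}$ exactly when $\alpha_1>\frac{p(n-1-\alpha_2)}{n-q}=:\rho(\alpha_2)$, the abscissa of $l_{N,B}$ at height $\alpha_2$ (here $n-q>0$ because $q\le n-2$). Likewise $(i,j)\in\mathcal{I}_\lambda$ means $in+jm>pn+qm$, i.e.\ $i>p+\frac{m(q-j)}{n}=:\tau(j)$, the abscissa of $l_{\sigma,A}$ at height $j$. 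Every point of $\Theta_f$ either is $(p-1,q)$ or arises as a unit shift of some $(i,j)\in\mathcal{I}_\lambda\cap\supp(f)\subseteq\mathcal{I}_\lambda$, so these two descriptions will suffice. The whole argument rests on the single inequality $pn+qm>mn$, which holds since $pn+qm=(n-1)m+\lambda$ and $\lambda>m$; it is equivalent to $\frac{p}{n-q}>\frac{m}{n}>1$ and implies $p+q>n$ (Remark \ref{p+q}).

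Next I would record the behaviour of the horizontal offset $\delta(j):=\tau(j)-\rho(j)=p+\frac{m(q-j)}{n}-\frac{p(n-1-j)}{n-q}$. Its derivative in $j$ equals $\frac{p}{n-q}-\frac{m}{n}>0$, so $\delta$ is strictly increasing, and a direct substitution gives the base value $\delta(q)=\frac{p}{n-q}>1$. Hence $\delta(j)\ge\delta(q)>1$ for every $j\ge q$. I also note the shear identity $\rho(j-1)-\rho(j)=\frac{p}{n-q}=\delta(q)$, which lets me pass between heights $j$ and $j-1$.

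Finally I would split $\Theta_f$ into its three types and check each lies above $l_{N,B}$ within the strip $q-1<\alpha_2<n-1$. For $(i-1,j)$ with $q\le j\le n-2$, membership in $\mathcal{I}_\lambda$ gives $i-1>\tau(j)-1\ge\rho(j)$, the last step being precisely $\delta(j)\ge 1$. For $(i,j-1)$ with $q+1\le j\le n-1$, I use $i>\tau(j)\ge\rho(j-1)$, where $\tau(j)-\rho(j-1)=\delta(j)-\delta(q)>0$ because $\delta$ is increasing and $j>q$. For the exceptional point $(p-1,q)$, substituting into $(n-q)x+py$ yields $n(p-1)+q$, which exceeds $(n-1)p$ if and only if $p+q>n$. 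The only delicate part is the pair of threshold estimates for $\delta$ (monotonicity and the value at $j=q$), but both collapse to the already-available inequality $pn+qm>mn$; so beyond this bookkeeping I anticipate no genuine obstacle.
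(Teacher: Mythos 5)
Your proof is correct and is essentially the paper's argument: both reduce everything to the single inequality $np+qm>mn$ (equivalently $\frac{p}{n-q}>\frac{m}{n}$), which compares $l_{N,B}$ with the lines of slope $-\frac{n}{m}$ through $A$ and $B$. The paper phrases this as a slope comparison at $l_{\sigma,B}$ after shifting the points, while you make the same comparison explicit via the horizontal offsets $\rho$, $\tau$ and their gap $\delta$; the content is the same.
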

		\begin{proof}
			Since $\frac{q-n}{p}>-1$ (see Remark \ref{p+q}), we deduce from the equation of $l_{N,B}$ that $(p-1,q)$ is above this line. On the other hand 
			any point $(i,j)\in \mathcal I_{\lambda}$ is above $l_{\sigma,A}$, then $(i-1,j)$ and $(i,j-1)$ are above $l_{\sigma,B}$ for every $(i,j)\in\mathcal I_{\lambda}$. Since the slope $\frac{q-n}{p}$ of $l_{N,B}$ is greater than the slope of $l_{\sigma,B}$, we have that $(i-1,j)$ and $(i,j-1)$ are above $l_{N,B}$ for every $(i,j)\in\mathcal I_{\lambda}$ with $q-1<j<n-1$.
		\end{proof}
		
		Notice that by Lemma \ref{lado-1} the Newton polygon of the generic polar curve $P(f)$ for any $f\in\mathcal{L}(n,m,\lambda)$ is the Newton polygon of the set 
		\begin{equation}\label{eq:points}
			E_{\lambda}\cup \{(\alpha_1,\alpha_2)\in \Theta_f\;:\;0\leq\alpha_2\leq  q-2\}.
		\end{equation}
		Observe that the points on \eqref{eq:points}  arise from the following points of $\supp (f)$:  
		\[\{(0,n), (m,0), (p,q)\}\cup \{(i,j)\in\mathcal{I}_{\lambda}\cap\supp (f):\ 0\leq j\leq q-1.\]
				
		In order to study the topological type of the generic polar of an element $f\in\mathcal{L}(n,m,\lambda)$ we will use the sets $E_{\lambda}$ and $T_{\lambda}$  defined in the sequel.
		
		Put $Q:=(m-1,1)$. Denote by $T_{\lambda}$ the set of points $(\alpha_1,\alpha_2)\in\mathbb{N}^{2}$ with $p<\alpha_1<m-1$, $0< \alpha_2<q$ such that $(\alpha_1,\alpha_2)$ is above $l_{\sigma, A}$ and below or on the line  $l_{A,Q}:(m-1-p)(y-q)+(q-1)(x-p)=0$. From the equations of the lines $l_{\sigma, A}$ and $l_{A,Q}$ we get
		\begin{equation}\label{regiont1}
			T_{\lambda}=\left \{\begin{array}{ll}(\alpha_1,\alpha_2)\in \mathbb{N}^2: & p<\alpha_1<m-1,\; \; 0<\alpha_2<q\vspace{0.2cm}\\ & \mbox{and}\ \  \frac{q-1}{m-1-p}\leq\frac{q-\alpha_2}{\alpha_1-p}<\frac{n}{m}\end{array}\right \}.\end{equation}
		
		Figure \ref{fig:T1} illustrates $T_{\lambda}$ for $\lambda=13\in \mathcal Z(5,12)$, in this case $T_{\lambda}=\{(10,1),(8,2)\}$.
		
		\begin{figure}[h!] 
			\begin{center}
				\begin{tikzpicture}[x=0.8cm,y=0.8cm] 
					\tikzstyle{every node}=[font=\small]
					\fill[fill=yellow!40!white] (12/5,4) --(11,4)-- (11,5/12) -- (12/5,4) --cycle;
					\fill[fill=green] (5,3) --(11,1)-- (49/5,1) -- (5,3) --cycle;
					
					\foreach \x in {0,...,12}{
						\foreach \y in {0,...,5}{
							\node[draw,circle,inner sep=1pt,fill, color=gray!40] at (1*\x,1*\y) {}; }
					}
					\draw[-][dashed, line width=0.4mm] (-1,33/6) -- (13,-1/3);
					\node [above] at (5.8,3) {\tiny{$A=(p,q)$}};
					\node [below] at (3.8,2) {\tiny{$(p,q-1)=B$}};
					\node [left] at (0,5) {$n$};
					\node [below, left] at (0,3.9) {\tiny{$(0,n-1)=N$}};
					\node [below] at (12,0) {$m$};
					\node [below,left] at (11.1,-0.3) {\tiny{$(m-1,0)=M$}};
					\node [right] at (11,1.2) {\tiny{$Q=(m-1,1)$}};
					\node [right] at (11.1,0.6) {$R$};
					\node [below] at (13,-1/3) {$l_{\sigma ,A}$};
					\node [above] at (9,3) {$\overset{o}{T}$};
					
					\draw[->] (0,0) -- (13.5,0) node[right,below] {$i$};
					\draw[->] (0,0) -- (0,6) node[above,left] {$j$};
					\draw[-, line width=0.5mm] (0,5) -- (12,0);
					\draw[-, line width=0.5mm] (0,4) -- (12,4);
					\draw[-, line width=0.5mm] (11,0) -- (11,5);
					\draw[-, line width=0.5mm] (5,2) -- (11,0);
					\draw[-, line width=0.5mm] (0,4) -- (5,2);
					\draw[-, line width=0.5mm , color=green!80!black] (5,3) -- (11,1); 
					\draw[-, line width=0.5mm , color=green!80!black] (49/5,1) -- (11,1); 
					
					\draw[->][thick, color=black] (8.2,2) .. controls (10,1.8) .. (11.6,2.3);
					\draw[->][thick, color=black](10,1.2) .. controls (10,1.8) .. (11.6,2.3);
					\node [above] at (12,2) {$T_{\lambda}$};
					
					\node[draw,circle,inner sep=1.5pt,fill, color=black] at (11,1/2){};
					\node[draw,circle,inner sep=1.5pt,fill, color=black] at (5,2){};
					\node[draw,circle,inner sep=1.5pt,fill, color=black] at (11,0){};
					\node[draw,circle,inner sep=1.5pt,fill, color=black] at (12,0){};
					\node[draw,circle,inner sep=1.5pt,fill, color=black] at (0,4){};
					\node[draw,circle,inner sep=1.5pt,fill, color=black] at (0,5){};
					\node[draw,circle,inner sep=1.5pt,fill, color=black] at (10,1){}; 
					\node[draw,circle,inner sep=1.5pt,fill, color=black] at (8,2){}; 
					\node[draw,circle,inner sep=1.5pt, color=black] at (5,3){};
					\node[draw,circle,inner sep=1.5pt, color=black] at (11,1){};
				\end{tikzpicture}
			\end{center}
			\caption{Points in $T_{\lambda}$ for $\lambda=13\in\mathcal{Z}(5,12)$.} 
			\label{fig:T1}
		\end{figure}
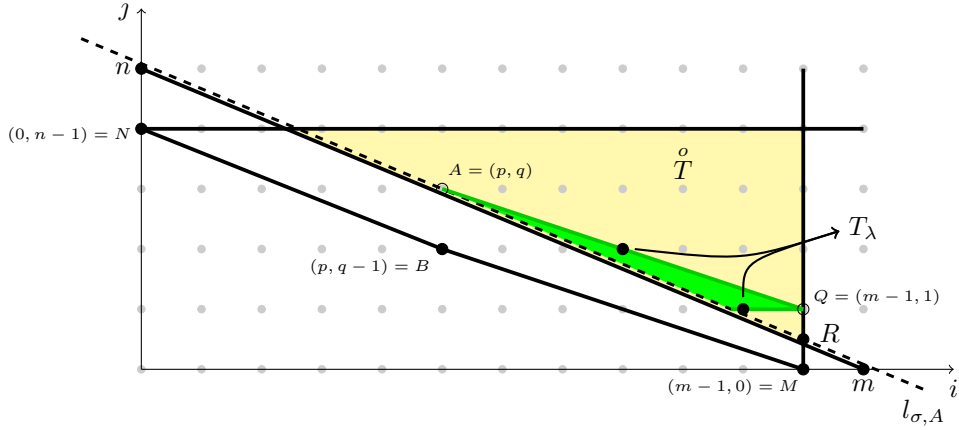
		
		Now we highlight some cases where
		$T_{\lambda}=\emptyset$.
		
		\begin{lema}\label{T=0}
			Let $\lambda\in \mathcal Z(n,m)$ with $\Phi(\lambda)=(p,q)$. We get $T_{\lambda}=\emptyset$ in the following cases:
			\begin{enumerate}
				\item $\lambda>2m-n$.
				\item $\Phi(\lambda)=(p,1)$, that is $q=1$.
				\item For any $\lambda\in K(3,m)$ with $m\geq 7$.  
				\item For any $\lambda\in K(n,n+1)$.
			\end{enumerate}
		\end{lema}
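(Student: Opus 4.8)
The plan is to reduce every case to the emptiness of the half-open slope interval $\left[\frac{q-1}{m-1-p},\frac{n}{m}\right)$ that governs $T_\lambda$ through the description (\ref{regiont1}): a lattice point $(\alpha_1,\alpha_2)\in T_\lambda$ can exist only when this interval is nonempty, since $\frac{q-1}{m-1-p}\le\frac{q-\alpha_2}{\alpha_1-p}<\frac{n}{m}$ is among the defining constraints. (Because $\beta\ge 2$ forces $p=m-\beta\le m-2$, we always have $m-1-p\ge 1$, so the quotient is well defined.) The first step is to translate nonemptiness of this interval into an arithmetic condition on $\lambda$. Clearing denominators, $\frac{q-1}{m-1-p}<\frac{n}{m}$ is equivalent to $np+qm<(n+1)m-n$; substituting the identity $np+qm=(n-1)m+\lambda$, valid since $\Phi(\lambda)=(p,q)$, this is exactly $\lambda<2m-n$. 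Hence $T_\lambda\neq\emptyset$ forces $\lambda<2m-n$, which settles case (1) immediately: if $\lambda>2m-n$ the interval is empty and $T_\lambda=\emptyset$.

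For case (2), if $q=1$ then the constraint $0<\alpha_2<q$ admits no integer value, so $T_\lambda=\emptyset$ regardless of the slope conditions. Case (3) then reduces to case (2): for a branch in $K(3,m)$ we have $n=3$, so by Lemma \ref{landas} and Proposition \ref{biyeccion} the only admissible index is $\alpha=2$, whence $q=\alpha-1=1$.

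The remaining case (4), $m=n+1$, is where the argument has genuine content. Writing $\lambda=\alpha m-\beta n$ and $\Phi(\lambda)=(p,q)=(m-\beta,\alpha-1)$ as in Proposition \ref{biyeccion}, I would use the defining bound $\beta<\frac{(\alpha-1)m}{n}=(\alpha-1)+\frac{\alpha-1}{n}$ together with $\alpha-1\le n-2<n$ to conclude $\beta\le\alpha-1=q$. Since $m=n+1$ gives $m-1-p=\beta-1$, this yields $m-1-p=\beta-1\le q-1$, so that $\frac{q-1}{m-1-p}\ge 1>\frac{n}{m}$ whenever $q>1$ (the case $q=1$ being case (2)). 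The interval is therefore empty and $T_\lambda=\emptyset$. I expect the main obstacle to be precisely this number-theoretic step $\beta\le q$, i.e.\ pinning down the location of $(p,q)$ inside $\overset{o}{T}$ from the inequalities defining $\mathcal Z(n,m)$; once the equivalence ``$T_\lambda\neq\emptyset\Rightarrow\lambda<2m-n$'' is established, the four cases follow with only routine bookkeeping.
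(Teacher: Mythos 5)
Your proof is correct. Items (1)--(3) are essentially the paper's argument: your arithmetic reduction of the nonemptiness of the slope interval $\left[\frac{q-1}{m-1-p},\frac{n}{m}\right)$ to $\lambda<2m-n$ via $np+qm=(n-1)m+\lambda$ is exactly the paper's geometric comparison of the lines $l_{\sigma,A}$ and $l_{A,Q}$ (both through $A$) along the vertical $x=m-1$, and your treatment of $q=1$ and of $n=3$ (forcing $\alpha=2$, hence $q=1$) matches the paper verbatim. The one genuine divergence is item (4). The paper observes that $n+2\notin\mathcal Z(n,n+1)$ because $(n+2)+n=2(n+1)\in\langle n,n+1\rangle$, so every $\lambda\in\mathcal Z(n,n+1)$ satisfies $\lambda>n+2=2m-n$, and item (4) collapses into item (1). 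You instead work inside $\overset{o}{T}$: from $\beta<\frac{(\alpha-1)(n+1)}{n}=(\alpha-1)+\frac{\alpha-1}{n}$ and $\alpha-1<n$ you get $\beta\le\alpha-1=q$, hence $m-1-p=\beta-1\le q-1$ and the slope interval is empty. Both are short and correct; the paper's version is a one-line semigroup remark that makes transparent why item (1) always applies when $m=n+1$ (and is reused in the subsequent corollary), while yours is self-contained at the level of the $(p,q)$-coordinates and does not need to locate $\mathcal Z(n,n+1)$ relative to $2m-n$. Your anticipated ``main obstacle,'' the bound $\beta\le q$, is real but, as you show, routine; the paper simply sidesteps it.
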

		
		\begin{proof}  Since the intersection point of $l_{\sigma,A}$ with $x=m-1$ is $R=\left ( m-1,\frac{\lambda+n-m}{m}\right )$ it follows that $\lambda>2m-n$ if and only if $R=(m-1,\delta)$ with $\delta>1$. But the intersection point of $l_{A,Q}$ with $x=m-1$ is $Q=(m-1,1)$. In particular, if $\lambda>2m-n$ we get $T_{\lambda}=\emptyset$ and the item 1) follows.   
			
			The item 2) follows directly from the definition of $T_{\lambda}$ (see \eqref{regiont1}).
			
			For the third statement, recall that any $\lambda\in\mathcal{Z}(3,m)$ with $m\geq 7$ is expressed, according to Lemma \ref{landas}, as $\lambda=2m-3j$ for some $2\leq j<\frac{m}{3}$, so $\Phi(\lambda)=(m-j,1)$ and the item 2) gives the result.
			
			For the item 4), remark that $n+2\not\in \mathcal Z(n,n+1)$ since $n+2+n=2(n+1)\in \langle n,n+1\rangle$, so for any $\lambda\in\mathcal{Z}(n,n+1)$  we get
			$\lambda> n+2=2(n+1)-n$, that is we have the condition of item 1) and the lemma follows.
		\end{proof}
		
		In the following subsection we will show how the set $T_{\lambda}$ can be used to determine the Newton polygon of the generic polar curve of $f\in \mathcal{L}(n,m,\lambda)$. 
				
		\subsection{Topological type of $P(f)$ for $f\in\mathcal{L}(n,m,\lambda)$}
		
		We recall that if $C_f$ is such that $\lambda_f=\infty$ then, according to \cite[pages 785-786]{zariski-torsion}, the plane branch is analytically equivalent to $y^n-x^m=0$ with $\gcd (n,m)=1$ and in this case, the topological type of the generic polar curve $P(f)$ is the same of the topological type of $y^{n-1}-x^{m-1}=0$. Indeed  by Proposition \ref{Oka}  that is, if $d:=\gcd(n-1,m-1)$ then $P(f)$ have $d$ irreducible equisingular components $\{P_i\}_i^{d}$ such that $\Gamma_{P_i}=\left \langle \frac{n-1}{d},\frac{m-1}{d} \right \rangle$ for any $i\in \{1,\ldots,d\}$ and $\I(P_i,P_j)=\frac{(n-1)(m-1)}{d^2}$ for any $i\neq j$.
		
		The following theorem  highlights the relevance of the region $T_{\lambda}$ to describe the Newton polygon of $P(f)$ of a plane branch $f\in\mathcal{L}(n,m,\lambda)$.
		
		\begin{teo}\label{prop-triang1}
			For any $f=y^n-x^m+x^{p}y^{q}+\sum_{(i,j)\in \mathcal I_{\lambda} }a_{i,j}x^iy^j\in \mathcal{L}(n,m,\lambda)$ with $\lambda\in\mathcal{Z}(n,m)$ such that $\Phi(\lambda)=(p,q)$, 
			the Newton polygon of $P(f)$ is equal to the Newton polygon of the set  
			\begin{equation}\label{key}
				E_{\lambda}\ \cup\ \left\{(i,j-1),(i-1,j):\ (i,j)\in T_{\lambda}\cap\supp (f) \right\}.\end{equation}		
			In particular, the Newton polygon of $P(f)$ is equal to the Newton polygon of the generic polar of $y^n-x^m+x^{p}y^{q}+\sum_{(i,j) }a_{i,j}x^iy^j$, where the sum runs on $T_{\lambda}\cap\supp (f)$ and, consequently if $T_{\lambda}=\emptyset$ then the Newton polygon of $P(f)$ is  
			$\mathcal{NP}(E_{\lambda})$.
		\end{teo}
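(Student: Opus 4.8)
The plan is to read off the generic polar directly from (\ref{equationf}), push everything to the level of Newton polygons via the reduction already recorded before the statement, and then prove that among the surviving lattice points only the two shadows of $T_\lambda$ are extremal. First I would differentiate (\ref{equationf}) and note that for a generic $(a:b)$ the monomial $x^{\alpha_1}y^{\alpha_2}$ appears in $af_x+bf_y$ precisely when $(\alpha_1+1,\alpha_2)\in\supp(f)$ or $(\alpha_1,\alpha_2+1)\in\supp(f)$: its coefficient is a nonzero $\mathbb{C}$-linear form in $(a,b)$ built from the coefficients of $f$ at those two points, and only finitely many such forms must be kept off zero, so no cancellation is forced. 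Hence $\supp(P(f))=E_\lambda\cup\Theta_f$ for generic $(a:b)$, and by the reduction stated just after Lemma \ref{lado-1} it suffices to study the set in (\ref{eq:points}); its points outside $E_\lambda$ are exactly the shadows $(i-1,j)$ and $(i,j-1)$ of the points $(i,j)\in\mathcal{I}_\lambda\cap\supp(f)$ with $1\le j\le q-1$ (membership in $\mathcal{I}_\lambda$ already forces $j\ge1$, since $ni+mj>nm$ together with $i\le m-2$ excludes $j=0$).

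Next I would show that on this strip the defining conditions of $T_\lambda$ in (\ref{regiont1}) agree with those of $\mathcal{I}_\lambda$. For $(i,j)\in\mathcal{I}_\lambda$ with $1\le j\le q-1$, being above $l_{\sigma,A}$ gives $i>p+\frac{m(q-j)}{n}\ge p+\frac{m}{n}>p+1$, while $i\le m-2$ by definition; thus $p<i<m-1$ and $0<j<q$ hold automatically, so
\[
T_\lambda=\{(i,j)\in\mathcal{I}_\lambda:\ 1\le j\le q-1\}\cap\{\text{points on or below }l_{A,Q}\}.
\]
Consequently the points of $\mathcal{I}_\lambda\setminus T_\lambda$ in the strip are exactly those strictly above $l_{A,Q}$, and the whole theorem reduces to showing that the shadows of such points lie in $\mathcal{ND}(E_\lambda)$ and therefore cannot change the Newton polygon. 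The key tool here is the translation identity $l_{A,Q}=l_{B,M}+(0,1)$, coming from $A=B+(0,1)$ and $Q=M+(0,1)$: a point is strictly above $l_{A,Q}$ if and only if its vertical shadow $(i,j-1)$ is strictly above the line $l_{B,M}$ through $B$ and $M$. Note also that each such shadow satisfies $p<\alpha_1<m-1$.

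It remains to control the horizontal shadow $(i-1,j)=(i,j-1)+(-1,1)$, and here I would split on the sign of $p+q-m$, equivalently on the slope of $l_{B,M}$. If $p+q<m$ that slope lies in $(-1,0]$, the vector $(-1,1)$ moves $(i,j-1)$ still further above $l_{B,M}$, and since every shadow has $p<\alpha_1<m-1$ one checks that ``above $l_{B,M}$'' implies ``inside $\mathcal{ND}(E_\lambda)$'' whether or not $B$ lies below $l_{N,M}$ (when $B$ is on or above $l_{N,M}$ one uses that $l_{B,M}\ge l_{N,M}$ on $[p,m-1]$). If $p+q\ge m$ the slope is $\le-1$ and the translation trick fails for the horizontal shadow, so instead I would argue by weights, proving $i>p+\frac{m(q-j)}{n}\ge\frac{(m-1)(n-j)}{n-1}$; the right-hand inequality is linear and increasing in $j$, so it suffices to check it at $j=1$, where it collapses to $\frac{m}{n}\ge1$. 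This forces $(n-1)i+(m-1)j>(m-1)n>(n-1)m$, placing both shadows strictly above $l_{N,M}$ and hence in $\mathcal{ND}(E_\lambda)$.

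Assembling these facts, every surviving point of $\supp(P(f))$ either belongs to $E_\lambda\cup\{(i-1,j),(i,j-1):(i,j)\in T_\lambda\cap\supp(f)\}$ or lies in $\mathcal{ND}(E_\lambda)$, so $\mathcal{ND}(P(f))$ is contained in $\mathcal{ND}$ of the set in (\ref{key}); the reverse inclusion is immediate since $E_\lambda$ and the shadows of $T_\lambda\cap\supp(f)$ are genuine points of $\supp(P(f))$ for generic $(a:b)$ (the stray horizontal shadow $(i-1,q-1)$ arising when $j=q-1$ sits at height $q-1$ to the right of $B$, hence is dominated by $B$ and harmless). This gives the claimed equality of Newton polygons; the ``in particular'' statements then follow, since truncating $f$ to $T_\lambda\cap\supp(f)$ leaves the relevant support unchanged, and $T_\lambda=\emptyset$ makes (\ref{key}) equal to $E_\lambda$. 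I expect the slope $\le-1$ subcase to be the main obstacle: there the clean translation argument breaks for the horizontal shadow and one must fall back on the weight estimate — which is exactly the regime $\lambda>2m-n$ where Lemma \ref{T=0}(1) already guarantees $T_\lambda=\emptyset$, so the two arguments must be reconciled to cover it uniformly.
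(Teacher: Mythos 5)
Your argument is correct and follows essentially the same route as the paper: reduce via Lemma \ref{lado-1} to the shadows of the points of $\mathcal I_{\lambda}\cap\supp(f)$ with $1\leq j\leq q-1$, then use the parallel lines $l_{A,Q}$ and $l_{B,M}$ to show that points strictly above $l_{A,Q}$ only produce shadows inside $\mathcal{ND}(E_{\lambda})$, so that exactly the shadows of $T_{\lambda}\cap\supp(f)$ survive. The one place you go beyond the paper is genuinely useful: the paper's claim that the horizontal shadow $(i-1,j)$ of a point above $l_{A,Q}$ is again above $l_{B,M}$ needs $q-1\leq m-1-p$, i.e.\ $p+q\leq m$, which Remark \ref{p+q} only guarantees for $\lambda<2m-n$, and your separate weight estimate for $p+q\geq m$ (forcing $(n-1)i+(m-1)j>(m-1)n$, hence both shadows strictly above $l_{N,M}$) closes that remaining case explicitly.
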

		\begin{proof}
			As a consequence of Lemma \ref{lado-1}, the Newton polygon of the generic polar curve $P(f)$ for any $f=y^n-x^m+x^{p}y^{q}+\sum_{(i,j)\in \mathcal I_{\lambda} }a_{i,j}x^iy^j\in\mathcal{L}(n,m,\lambda)$ is equal to the Newton polygon of the set given in \eqref{eq:points}.
			
			Notice that any point $(i,j)\in \mathcal I_{\lambda}$ is above the line $l_{\sigma,A}$, in particular those points $(i,j)\in \mathcal I_{\lambda}$ with $0\leq j<q$.
			
			Remember that $l_{A,Q}$ is the line determined by the points $A=(p,q)$ and $Q=(m-1,1)$. Since $l_{A,Q}$ is parallel to the line $l_{B,M}$ passing by the points $B=(p,q-1)$ and $M=(m-1,0)$, any point $(i,j)\in \supp (f)$ above $l_{A,Q}$ with $1\leq j\leq q-1$ give us the points $(i,j-1), (i-1,j)\in \Theta_f$ that are above the line $l_{B,M}$, in particular, $(i,j-1),(i-1,j)\in\mathcal{ND}(E_{\lambda})\setminus\mathcal{NP}(E_{\lambda})$. Since
			$$\mathcal{ND}(E_{\lambda})\subseteq \mathcal{ND}(P(f)),$$ the points $(i,j-1)$ and $(i-1,j)$ in $\Theta_f$ and above the line $l_{B,M}$ do not affect the Newton diagram  of $P(f)$.
			
			In this way, for any $f\in \mathcal{L}(n,m,\lambda)$ the Newton polygon of the generic polar $P(f)$ coincides with the Newton polygon of the set $E_{\lambda}$ and the points $(i-1,j)$ and $(i,j-1)$ such that $(i,j)\in \supp(f)$ with $1\leq j\leq q-1$ that are above the line $l_{\sigma,A}$ and below or on the line $l_{A,Q}$, that is, the  Newton polygon of $P(f)$ is equal to the Newton polygon of the set
			\[
			E_\lambda\ \cup\ \left\{(i,j-1),(i-1,j):\ (i,j)\in T_{\lambda}\cap\supp (f) \right\}.
			\]
			
			In particular, if $T_{\lambda}=\emptyset$, then the Newton polygon of $P(f)$ is $\mathcal{NP}(E_{\lambda})$.
		\end{proof}
		
		In \cite[Section 3]{Casas}, Casas-Alvero considers the triangle $\hat{\Lambda}$ with vertices $(0,n),(m-1,1)$ and 
		$(m-\frac{m}{n},1)$. He proves that the Newton polygon of $P(f)$ for any element $f\in K(n,m)$ coincides with the Newton polygon of the generic polar of $y^n-x^m+\sum_{(i,j)\in\hat{\Lambda}}a_{i,j}x^iy^j$.
		
		 For $f\in\mathcal{L}(n,m,\lambda)$ with $\lambda>2m-n$, the Newton polygon of the generic polar curve equals to the Newton polygon of $E_{\lambda}$ (see Lemma \ref{T=0} and Theorem \ref{prop-triang1}). In Theorem \ref{theo} we will describe the topology of the polar in this case. 
		 
		 On the other hand, it follows, by Remark \ref{p+q}, that for any $\lambda<2m-n$ such that $\Phi(\lambda)=(p,q)$ we get $(p,q)\in\hat{\Lambda}$ and $T_{\lambda}\subsetneq\hat{\Lambda}$. In this way, the set $T_{\lambda}$ provides a refined set defining the Newton polygon of the generic polar curve $P(f)$ for any $f\in\mathcal{L}(n,m,\lambda)$ with $\lambda<2m-n$.
		
		Recall that any curve in $\mathcal{L}(n,m,\lambda)$ is analytically equivalent to a curve given by (\ref{equationf}), that is, $	f(x,y)=y^n-x^m+x^py^q+\sum_{(i,j)\in \mathcal{I}_\lambda}a_{ij}x^iy^j$ with $\Phi(\lambda)=(p,q)$. We say that $f$ is {\it generic} in $\mathcal{L}(n,m,\lambda)$ if the coefficients $a_{ij}$ of $f$ with $(i,j)\in\mathcal{I}_{\lambda}$ belong to an open Zariski set.
		
		In the following proposition, we will describe a set of points whose Newton polygon coincides with $\mathcal{NP}(P(f))$ for a generic element $f\in\mathcal{L}(n,m,\lambda)$.
		
		\begin{prop}\label{generic-polar}
			The Newton polygon of the generic polar curve $P(f)$ for a generic element $f\in\mathcal{L}(n,m,\lambda)$ equals the Newton polygon of the set $E_{\lambda}\cup S$ where $S=\emptyset$ if $\Phi(\lambda)=(p,1)$ or
			
			\[	S=\left \{\begin{array}{ll}\left(p+\left \lceil \frac{(q-j-1)m}{n}\right \rceil, j\right): &  0\leq j \leq q-2,\ \ \ \ \mbox{and}\vspace{0.2cm}\\ &  \left\lceil \frac{(q-j)m}{n}\right \rceil\leq \frac{(q-j)(m-p-1)}{q-1}\end{array}\right \} \]
			if $\Phi(\lambda)=(p,q)$ with $q>1$.
		\end{prop}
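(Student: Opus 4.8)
The plan is to reduce the statement to Theorem~\ref{prop-triang1} and then settle a purely combinatorial identity of Newton diagrams. First I would fix a generic $f\in\mathcal L(n,m,\lambda)$: genericity of the coefficients means $a_{ij}\neq 0$ for the (finitely many) points $(i,j)\in T_\lambda$, so that $T_\lambda\cap\supp(f)=T_\lambda$, recalling that $T_\lambda\subseteq\mathcal I_\lambda$ by the defining condition of lying above $l_{\sigma,A}$. By Theorem~\ref{prop-triang1} it then suffices to prove $\mathcal{ND}(E_\lambda\cup\Sigma)=\mathcal{ND}(E_\lambda\cup S)$, where $\Sigma:=\{(i,j-1),(i-1,j):(i,j)\in T_\lambda\}$, since Newton diagrams with the same diagram have the same polygon. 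If $\Phi(\lambda)=(p,1)$, then $q=1$ forces $T_\lambda=\emptyset$ by Lemma~\ref{T=0}(2) and $S=\emptyset$ by definition, so both sides equal $\mathcal{ND}(E_\lambda)$; I would treat this as the base case and assume $q>1$ henceforth.

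For $q>1$ I would analyze $T_\lambda$ height by height. Using (\ref{regiont1}) together with $\gcd(n,m)=1$, which makes $(q-j)m/n\notin\mathbb{Z}$ for $1\le j\le q-1$, the lattice points of $T_\lambda$ at height $j$ form an interval of integers whose leftmost element is $\ell_j:=p+\lceil (q-j)m/n\rceil$, subject to $\ell_j\le m-2$ near the vertex $Q=(m-1,1)$. Passing to shifts, at each height $h$ the candidates in $\Sigma$ are the left-shift $(\ell_h-1,h)$ (present when $T_\lambda$ meets height $h$) and the down-shift $(\ell_{h+1},h)$ (present when $T_\lambda$ meets height $h+1$). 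Since $m/n>1$ gives $\lceil (q-h-1)m/n\rceil\le \lceil (q-h)m/n\rceil-1$, the down-shift lies weakly to the left of the left-shift, so it is the binding point; its abscissa is exactly $p+\lceil (q-h-1)m/n\rceil$, which matches the first coordinate of the point of $S$ indexed by $j=h$. This identifies each point of $S$ with the leftmost point of $\Sigma$ at its height.

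I would then establish the two inclusions. For $\mathcal{ND}(E_\lambda\cup S)\subseteq\mathcal{ND}(E_\lambda\cup\Sigma)$ it suffices to check that every point of $S$ is either a genuine down-shift (hence in $\Sigma$) or is dominated, in the product order, by $B=(p,q-1)$ or by $M=(m-1,0)$; the selection condition $\lceil (q-j)m/n\rceil\le (q-j)(m-1-p)/(q-1)$ attached to $S$ must here be matched against the requirement that the point lie on or below the edge $l_{B,M}$ of $\mathcal{NP}(E_\lambda)$, which is the only way a new point can influence the polygon. For the reverse inclusion I would show that each $(i,j-1)$ and $(i-1,j)$ with $(i,j)\in T_\lambda$ is dominated by the point of $S$ at the appropriate height (using $i\ge\ell_j$ and the interval structure) or, when it falls above $l_{B,M}$, by $B$ itself; the argument of Theorem~\ref{prop-triang1} already places such points interior to $\mathcal{ND}(E_\lambda)$, so they are irrelevant.

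The main obstacle is the index and boundary bookkeeping linking the selection condition on $S$ to the existence of the relevant $T_\lambda$-levels. The $S$-point at height $j$ is produced by the down-shift of a $T_\lambda$-point at height $j+1$, whose existence is governed by the level $q-j-1$, whereas the stated condition involves the level $q-j$; because the level-existence function need not be monotone, I expect the heart of the proof to be a short convexity and ceiling estimate showing that whenever the stated condition holds the corresponding point is non-redundant and lies on or below $l_{B,M}$, while whenever it fails the candidate is dominated and may be discarded. The delicate endpoint is the vertex $Q=(m-1,1)$, excluded from $T_\lambda$ by the constraint $\alpha_1<m-1$; I would handle the index $j=0$ separately there, comparing the $S$-point directly with $M$.
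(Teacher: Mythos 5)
Your proposal follows essentially the same route as the paper's proof: reduction via Theorem~\ref{prop-triang1} and genericity to the set $E_\lambda\cup\{(i,j-1),(i-1,j):(i,j)\in T_\lambda\}$, the base case $q=1$ via Lemma~\ref{T=0}, and the height-by-height identification of the minimal abscissa $p+\lceil (q-j-1)m/n\rceil$ as coming from the down-shift of the height-$(j+1)$ point, using the same inequality $\lceil (q-j-1)m/n\rceil\le\lceil (q-j)m/n\rceil-1$. The one step you defer --- reconciling the fact that this abscissa is produced by a $T_\lambda$-point at height $j+1$ (level $q-j-1$) with the selection condition in $S$, which is the nonemptiness criterion for level $q-j$ --- is a genuine subtlety, but it is not a gap relative to the paper: the paper's proof carries out the same derivation and then attaches the level-$(q-j)$ condition without addressing the mismatch (or the excluded vertex $Q=(m-1,1)$), so the ``convexity and ceiling estimate'' you call for would be supplying detail that the published argument also omits.
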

		\begin{proof}
			By Theorem \ref{prop-triang1}, for any $f\in\mathcal{L}(n,m,\lambda)$ (generic or not) the Newton polygon $\mathcal{NP}(P(f))$ of $P(f)$ coincides with the Newton polygon of the set given in (\ref{key}).
			
			If $q=1$ then it follows, by item 2 of Lemma \ref{T=0}, that $T_{\lambda}=\emptyset$ and consequently, $\mathcal{NP}(P(f))=\mathcal{NP}(E_{\lambda})$ for any $f\in\mathcal{L}(n,m,\lambda)$ generic or not.	
			
			Let us suppose that $q>1$ and consider a generic  $f\in\mathcal{L}(n,m,\lambda)$ given as in (\ref{equationf}) whose coefficients $a_{ij}$ live in the open Zariski set $U:=\{ a_{ij}\neq 0:\ (i,j)\in T_{\lambda}\}$, that is  for a generic  $f\in\mathcal{L}(n,m,\lambda)$, we get $T_{\lambda}\cap \supp(f)=T_{\lambda}$, so the Newton polygon of $P(f)$ coincides with the Newton polygon of the set 
			\begin{equation}\label{key2}
				E_{\lambda}\ \cup\ \left\{(i,j-1),(i-1,j):\ (i,j)\in T_{\lambda} \right\}.\end{equation}	
						
			Notice that the point $(i,j-1)$ and $(i-1,j)$ in (\ref{key2}) arise from  the point $(i,j)\in T_{\lambda}\subset\mathcal{I}_{\lambda}$ when we consider $f_y$ and $f_x$ respectively.
			If $(i,j)\in \mathcal{I}_{\lambda}$ then $i>p+\left ( \frac{(q-j)m}{n}\right )$.
					
			Therefore, $i=p+\left [ \frac{(q-j)m}{n}\right ]$ is the lowest possible value of $i$ such that $(i,j)\in\supp (f)$.
			
			Hence, for $(i,j)\in T_{\lambda}$ we get:
			\begin{itemize}
				\item the minimal value of $i$ such that $(i,j-1)\in \supp (f_y)$ is $p+\left [ \frac{(q-j)m}{n}\right ]$;
				\item the minimal value of $i$ for $(i-1,j)\in supp(f_x)$ is $p+\left [ \frac{(q-j)m}{n}\right ]-1$.
			\end{itemize}		
			Since for every $(i,j)\in\mathcal{I}_{\lambda}$ we get
			\[ p+\left[\frac{(q-j-1)m}{n}\right] \leq p+\left[\frac{(q-j)m}{n}\right]-1,\]
			\noindent and  it follows that
			$i(\rho)=p+\left[\frac{(q-\rho-1)m}{n}\right]$,
			is the minimal value of $i$ such that $(i(\rho),\rho)\in\supp(P(f))$ for $0\leq\rho\leq q-2$, that is, the Newton polygon of $P(f)$ equals to the Newton polygon of the set $E_{\lambda}\ \cup\ \{(i(\rho),\rho):\ 0\leq\rho\leq q-2\}$. However, since the point $\left (p+\left[\frac{(q-\rho-1)m}{n}\right],\rho\right )$ arise the point $\left (p+\left[\frac{(q-j)m}{n}\right],j\right )\in\mathcal{I}_{\lambda}$ for $1\leq j\leq q-1$, it is sufficient to consider the case when the  point $(i(\rho),\rho)$ belongs to $T_{\lambda}$.
			
			Since $q\neq 1$, by (\ref{regiont1}), we have $(i,j)\in T_{\lambda}$ if and only if
			\[ p+\frac{(q-j)m}{n}<i\leq p+\frac{(q-j)(m-p-1)}{q-1},\ 1\leq j\leq q-1\ \ \mbox{and}\ i\neq m-1.\]
			Thus, for each $1\leq j\leq q-1$ we get 
			that $\{i\in\mathbb{N}\ :\ (i,j)\in T_{\lambda}\}$ is the emptyset when $\left\lceil \frac{(q-j)m}{n}\right \rceil> \frac{(q-j)(m-p-1)}{q-1}$; otherwise $\min\{i\in\mathbb{N}\ :\ (i,j)\in T_{\lambda}\}=p+\left\lceil \frac{(q-j)m}{n}\right \rceil$.
			
			Hence, $\mathcal{NP}(P(f))$ coincides with the Newton polygon of the set $E_{\lambda}\ \cup\ \{\left (p+\left[\frac{(q-j-1)m}{n}\right],j\right ):\ 0\leq j\leq q-2\}$ such that $\left\lceil \frac{(q-j)m}{n}\right \rceil\leq \frac{(q-j)(m-p-1)}{q-1}$ and we get the proposition.
		\end{proof}
				 
		By Proposition \ref{prop-triang1}, for any $f\in\mathcal{L}(n,m,\lambda)$  satisfying $\supp (f)\cap T_{\lambda}=\emptyset$ we get $\mathcal{NP}(P(f))=\mathcal{NP}(E_{\lambda})$. In what follows we will determine the topological type of $P(f)$ when its Newton polygon equals the Newton polygon of $E_{\lambda}$. Notice that this corresponds to considering, for instance, the curve of equation 
		\begin{equation}\label{special}
			f=y^n-x^m+x^py^q\in \mathcal L(n,m,\lambda),
		\end{equation}
		\noindent where $\Phi(\lambda)=(p,q)$.
		
		\begin{prop}\label{polar-special}
			Given $f=y^n-x^m+x^py^q \in \mathcal L(n,m,\lambda)$ with $\Phi(\lambda)=(p,q)$ and $q>1$ the topological type of the generic polar $P(f)$ is determined by $n,m$ and $\lambda$ as follows:
			\vspace{0.2cm}
			
			\noindent {\bf 1)} If $\lambda<p+q+m-n$ then $P(f)$ has $d_1:=\gcd(n-q,p)$ branches $\{P_i\}_{i=1}^{d_1}$ with semigroup of values $\langle \alpha_1, \beta_1\rangle$ where $\alpha_1=\frac{n-q}{d_1}$ and $\beta_1=\frac{p}{d_1}$; and $d_2:=\gcd(q-1,m-p-1)$ branches $\{Q_i\}_{i=1}^{d_2}$ with semigroup of values $\langle \alpha_2, \beta_2\rangle$ where $\alpha_2:=\frac{q-1}{d_2}$ and $\beta_2:=\frac{m-p-1}{d_2}$. Moreover,		$\I(P_i,P_j)=\alpha_1\beta_1$, $\I(Q_i,Q_j)=\alpha_2\beta_2$ and $\I(P_i,Q_j)=\min\{\alpha_1\beta_2, \alpha_2\beta_1\}$ for $i\neq j$.\vspace{0.2cm}
			
			\noindent {\bf 2)} If $\lambda\geq p+q+m-n$ then $P(f)$ is the union  of $d:=\gcd(n-1,m-1)$ equisingular branches $P_i$ with semigroup of values $\langle \alpha, \beta\rangle$
			where  $\alpha=\frac{n-1}{d}$ and $\beta=\frac{m-1}{d}$ and such that $\I(P_i,P_j) =\alpha \beta$, for $i\neq j$.
		\end{prop}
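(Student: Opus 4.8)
The plan is to compute the Newton polygon of the generic polar directly and then feed it into Oka's Proposition \ref{Oka}. First I would write $f_x=-mx^{m-1}+px^{p-1}y^q$ and $f_y=ny^{n-1}+qx^py^{q-1}$, so that for generic $(a:b)$ the polar $af_x+bf_y$ is supported on the four points $M=(m-1,0)$, $(p-1,q)$, $N=(0,n-1)$ and $B=(p,q-1)$, with nonzero coefficients $-am$, $ap$, $bn$, $bq$. Since $f=y^n-x^m+x^py^q$ satisfies $\supp(f)\cap T_{\lambda}=\emptyset$, Theorem \ref{prop-triang1} gives $\mathcal{NP}(P(f))=\mathcal{NP}(E_{\lambda})$; equivalently, by Lemma \ref{lado-1} the point $(p-1,q)$ lies above $l_{N,B}$ and may be discarded, so the Newton polygon of $P(f)$ is that of $\{N,B,M\}$.

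The key combinatorial step is to locate $B=(p,q-1)$ relative to the segment $l_{N,M}:(n-1)x+(m-1)y=(n-1)(m-1)$. Using the identity $np+qm=(n-1)m+\lambda$ one computes $(n-1)p+(m-1)q=(n-1)m+\lambda-(p+q)$, whence $B$ lies strictly below $l_{N,M}$, i.e. $(n-1)p+(m-1)q<n(m-1)$, if and only if $\lambda<p+q+m-n$. Therefore in Case 1 the Newton polygon of $P(f)$ has the two compact edges $L_1=[N,B]$ and $L_2=[B,M]$, while in Case 2 ($\lambda\geq p+q+m-n$) the point $B$ lies on or above $l_{N,M}$ and the Newton polygon reduces to the single edge $[N,M]$.

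Next I would record the projection lengths. For $L_1$ one has $|L_1|_1=p$ and $|L_1|_2=n-q$, so $d_{L_1}=\gcd(p,n-q)=d_1$; for $L_2$ one has $|L_2|_1=m-1-p$ and $|L_2|_2=q-1$, so $d_{L_2}=\gcd(m-1-p,q-1)=d_2$; and for the single edge of Case 2, $|L|_1=m-1$, $|L|_2=n-1$, $d_L=\gcd(m-1,n-1)=d$. Before invoking Oka I must check non-degeneracy: the point $(p-1,q)$ never lies on any of these edges (it is above $l_{N,B}$, and since $m>n$ it is strictly above $l_{N,M}$), so away from the threshold the edge polynomials are the binomials $bny^{n-1}+bqx^py^{q-1}$ and $bqx^py^{q-1}-amx^{m-1}$ (Case 1), or $bny^{n-1}-amx^{m-1}$ (Case 2), whose nonzero $y$-roots are simple. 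Applying Proposition \ref{Oka} edge by edge then yields $d_1$ (resp. $d_2$, resp. $d$) branches of multiplicity $\alpha_1=\frac{n-q}{d_1}$ (resp. $\alpha_2=\frac{q-1}{d_2}$, resp. $\alpha=\frac{n-1}{d}$) with the asserted semigroups, and the intersection formula of Proposition \ref{Oka} gives $\I(P_i,P_j)=\alpha_1\beta_1$, $\I(Q_i,Q_j)=\alpha_2\beta_2$ and $\I(P_i,Q_j)=\min\{\alpha_1\beta_2,\alpha_2\beta_1\}$ directly.

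The main obstacle I anticipate is the non-degeneracy verification at the threshold $\lambda=p+q+m-n$, where $B$ becomes a genuine intermediate lattice point of the edge $[N,M]$ and the edge polynomial is the trinomial $bny^{n-1}+bqx^py^{q-1}-amx^{m-1}$ rather than a binomial. There I would argue that the generic member of the polar pencil avoids the (proper) discriminantal locus in $\mathbb{P}^1(\mathbb{C})$ — the $y$-discriminant of that edge polynomial being a nonzero polynomial in $a,b$ — so Oka's hypothesis still holds, the topological data being unchanged because $|L|_1$, $|L|_2$ and $d_L$ do not detect the intermediate point.
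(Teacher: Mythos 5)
Your proposal is correct and follows essentially the same route as the paper: reduce to $\mathcal{NP}(E_{\lambda})$ via Theorem \ref{prop-triang1}, split into cases according to whether $B=(p,q-1)$ lies below or on/above $l_{N,M}$ (which you correctly translate into $\lambda<p+q+m-n$ versus $\lambda\geq p+q+m-n$), and apply Proposition \ref{Oka} edge by edge, with the same genericity argument for Newton non-degeneracy in the threshold case. Your explicit verification of the edge polynomials is slightly more detailed than the paper's, but the argument is the same.
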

		\begin{proof}
			By Proposition \ref{prop-triang1},
			the Newton polygon of the generic polar curve $P(f)$ equals the Newton polygon of the set $E_\lambda$. We distinguish two cases:
			\vspace{0.2cm}
			
			\noindent {\bf 1)}  $\lambda<p+q+m-n$ that is, the point $(p,q-1)$ is below the line $l_{N,M}$.
			
			In this case the Newton polygon of $P(f)$ has two compact edges contained in  $l_{N,B}$ and $l_{B,M}$ respectively.
			
			According to Proposition \ref{Oka}, 
			associated with the compact edge contained in $l_{N,B}$ we get  $d_1:=\gcd(n-q,p)$ branches $\{P_i\}_{i=1}^{d_1}$ with semigroup of values  $\langle \alpha_1, \beta_1\rangle$ where $\alpha_1=\frac{n-q}{d_1}$ and $\beta_1=\frac{p}{d_1}$.
			Associated with the compact edge contained in $l_{B,M}$ we get  $d_2=\gcd(q-1,m-p-1)$ branches $\{Q_i\}_{i=1}^{d_2}$ with semigroup of values  $\langle \alpha_2, \beta_2\rangle$ where $\alpha_2=\frac{q-1}{d_2}$ and $\beta_2=\frac{m-p-1}{d_2}$. Moreover, for $i\neq j$, 
			$\I(P_i,P_j)=\alpha_1\beta_1$, $\I(Q_i,Q_j)=\alpha_2\beta_2$ and $\I(P_i,Q_j)=\min\{\alpha_1\beta_2, \alpha_2\beta_1\}$.
			\vspace{0.2cm}
			
			\noindent {\bf 2)} $\lambda\geq p+q+m-n$, that is  the point $(p,q-1)$ is on or above the line $l_{N,M}$.	
			
			In this case the Newton polygon of the generic polar curve $P(f)$ has only one compact edge $L$ contained in the line $l_{N,M}$. 
			The univariate polynomial associated with $L$ is $(P(f))_L(1,t)=bnt^{n-1}-am$ (when $(p,q-1)$ is above $l_{N,M}$) or $(P(f))_L(1,t)=bnt^{n-1}+bqt^{q-1}-am$ (when $(p,q-1)$ is on $l_{N,M}$). Since we are considering the generic polar of $f$ in both cases there exists an open set $U\subset\mathbb{P}^1$ such that for any $(a:b)\in U$ the polar $P(f)$ is Newton non-degenerate. In particular, by Proposition \ref{Oka}, the generic polar $P(f)$ is the union  of $d=\gcd(n-1,m-1)$ equisingular branches $\{P_i\}_{i=1}^d$ with $\Gamma_{P_i}=\langle \alpha, \beta \rangle$, for any $i$, where $\alpha=\frac{n-1}{d}, \beta=\frac{m-1}{d}$ and $\I(P_i,P_j) =\alpha \beta$ for $i\neq j$. 
		\end{proof}
		
		As a consequence of Lemma \ref{T=0} and Proposition \ref{polar-special} we have 
		
		\begin{corolario}
			Let $f\in\mathcal{L}(n,m,\lambda)$ where $\lambda\in\mathcal{Z}(n,m)\cup\{\infty\}$ and with $\Phi(\lambda)=(p,q)$ if $\lambda\neq\infty$.
			\begin{enumerate}
				\item If $\lambda>2m-n$ (admitting $\lambda=\infty$) then the generic polar curve $P(f)$ is  equisingular to the monomial curve $y^{n-1}-x^{m-1}=0$. 
				Moreover, if $d:=\gcd(n-1,m-1)$ then $P(f)$ have $d$ irreducible equisingular  components $\{P_i\}_i^{d}$ such that $\Gamma_{P_i}=\left \langle \frac{n-1}{d},\frac{m-1}{d} \right \rangle$ for any $i\in \{1,\ldots,d\}$ and $\I(P_i,P_j)=\frac{(n-1)(m-1)}{d^2}$ for any $i\neq j$.
				\item If $q=1$ then the generic polar curve 
				$P(f)$ is  equisingular to the monomial curve $y^{n-1}-x^p=0$. Moreover, if $d:=\gcd(n-1,p)$ then $P(f)$ have $d$ irreducible equisingular components $\{P_i\}_i^{d}$ such that $\Gamma_{P_i}=\left \langle \frac{n-1}{d},\frac{p}{d}\right \rangle$ for any $i\in \{1,\ldots,d\}$ and $\I(P_i,P_j)=\frac{(n-1)p}{d^2}$ for any $i\neq j$.
				\item For any $f\in K(3,m)$ the generic polar curve $P(f)$ has the topological type described in item 1) if $\lambda=\infty$ or as described in item 2) if $\lambda\neq\infty$.
				\item For any $f\in K(n,n+1)$ the generic polar curve $P(f)$ has the topological type  described in item 1).
			\end{enumerate}
		\end{corolario}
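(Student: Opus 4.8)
The plan is to treat all four items through a single reduction: in every case listed, Lemma \ref{T=0} forces $T_\lambda=\emptyset$, whence Theorem \ref{prop-triang1} gives $\mathcal{NP}(P(f))=\mathcal{NP}(E_\lambda)$ for \emph{every} $f\in\mathcal{L}(n,m,\lambda)$, so the topological type of $P(f)$ can be read off from the single polygon $\mathcal{NP}(E_\lambda)$ by the non-degeneracy of the generic polar together with Proposition \ref{Oka}. In particular, the conclusions of Proposition \ref{polar-special}, established there for the model curve $y^n-x^m+x^py^q$, transfer verbatim to all $f$ in the class, since the type extracted by Oka depends only on the Newton polygon. Thus the entire corollary amounts to identifying, in each case, the shape of $\mathcal{NP}(E_\lambda)$.

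First I would dispose of item 1 in the finite range $\lambda>2m-n$. By Lemma \ref{T=0}(1) we have $T_\lambda=\emptyset$, and I claim the vertex $B=(p,q-1)$ of $E_\lambda$ lies strictly above $l_{N,M}$, so that $\mathcal{NP}(E_\lambda)$ collapses to the single edge $[N,M]$, which is the Newton polygon of $y^{n-1}-x^{m-1}$. To prove the claim, observe that $np+m(q-1)>n(m-1)$ is equivalent to $\lambda>2m-n$, i.e.\ to $B$ lying above the line $l_{\sigma,M}\colon nx+my=n(m-1)$ of slope $-\tfrac{n}{m}$ through $M$; since $\tfrac{n}{m}>\tfrac{n-1}{m-1}$ this line is steeper than $l_{N,M}$, and as $B$ has abscissa $p<m-1$ (hence lies to the left of $M$), being above $l_{\sigma,M}$ forces $B$ above $l_{N,M}$. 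Equivalently $\lambda>p+q+m-n$, which is exactly case $2$ of Proposition \ref{polar-special}; note also that $\lambda>2m-n$ rules out $q=1$ (as $q=1$ gives $\lambda=2m-(m-p)n\le 2m-2n<2m-n$), so $q\ge 2$ and that proposition applies. This yields the $d=\gcd(n-1,m-1)$ equisingular branches with the stated semigroups and intersection numbers. For $\lambda=\infty$ the assertion is precisely the remark recorded just before Theorem \ref{prop-triang1}.

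For item 2 (where $q=1$), Lemma \ref{T=0}(2) again gives $T_\lambda=\emptyset$ and hence $\mathcal{NP}(P(f))=\mathcal{NP}(E_\lambda)$; but now $B=(p,0)$ lies on the horizontal axis with $p<m-1$, so $M=(m-1,0)$ is dominated and the unique compact edge of $\mathcal{NP}(E_\lambda)$ is $[N,B]=[(0,n-1),(p,0)]$, the Newton polygon of $y^{n-1}-x^{p}$. Proposition \ref{Oka} then produces $d=\gcd(n-1,p)$ equisingular branches with semigroup $\langle \tfrac{n-1}{d},\tfrac{p}{d}\rangle$ and pairwise intersection $\tfrac{(n-1)p}{d^2}$, as claimed. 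Items 3 and 4 follow by specialization: for $f\in K(3,m)$ the constraint $2\le\alpha\le n-1=2$ in Lemma \ref{landas} forces $\alpha=2$, hence $q=\alpha-1=1$, so every finite $\lambda$ falls under item 2 while $\lambda=\infty$ falls under item 1 (this is Lemma \ref{T=0}(3)); and for $f\in K(n,n+1)$ the computation in the proof of Lemma \ref{T=0}(4) shows every $\lambda\in\mathcal{Z}(n,n+1)$ satisfies $\lambda>2m-n$, placing it under item 1, where $m-1=n$ and $\gcd(n-1,n)=1$.

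The one genuinely non-formal point is the geometric step in item 1: identifying the threshold $\lambda>2m-n$ (which governs $T_\lambda=\emptyset$ and the position of $R$ relative to $Q$) with the threshold $\lambda>p+q+m-n$ (which governs the position of $B$ relative to $l_{N,M}$ and hence the single-edge branch of Proposition \ref{polar-special}). I expect this comparison of the two lines through $M$ to be the only place requiring care, precisely because the naive inequality $m\ge p+q$ can fail (cf.\ Remark \ref{p+q}, valid only for $\lambda<2m-n$); the correct argument is the slope comparison above, which holds irrespective of the sign of $m-p-q$. Everything else reduces to bookkeeping with Newton polygons and direct appeals to Proposition \ref{Oka}.
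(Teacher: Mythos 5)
Your proposal is correct and follows essentially the same route as the paper: Lemma \ref{T=0} gives $T_{\lambda}=\emptyset$ in every case, Theorem \ref{prop-triang1} reduces $\mathcal{NP}(P(f))$ to $\mathcal{NP}(E_{\lambda})$, and the cases are then settled by Proposition \ref{polar-special} and Proposition \ref{Oka} exactly as in the paper's proof. Your only additions are details the paper leaves implicit, namely the slope comparison showing that $\lambda>2m-n$ forces $(p,q-1)$ above $l_{N,M}$ even when $p+q>m$, and the check that $q>1$ in that case so that Proposition \ref{polar-special} applies; both are correct.
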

		\begin{proof}
			First of all, notice that in all cases we get, by Lemma \ref{T=0}, $T_{\lambda}=\emptyset$. In this way, by Theorem \ref{prop-triang1}, it follows that $\mathcal{NP}(P(f))=\mathcal{NP}(E_{\lambda})$ where $E_{\lambda}=\{(0,n-1),(m-1,0),(p,q-1)\}$. So, the topological type of $P(f)$ is the same of the topological type of the generic polar curve of $y^n-x^m+x^{p}y^{q}$ as given in (\ref{special}).
			
			If $\lambda>2m-n$ then $(p,q-1)$ is above the line $l_{N,M}$ and the result follows by item 2 of Proposition \ref{polar-special}.
			
			If $q=1$ then $\mathcal{NP}(P(f))=\mathcal{NP}(E_{\lambda})$ is equal to the Newton polygon of the set $\{(0,n-1), (p,0) \}$ and the result follows from Proposition \ref{Oka}.
			
			If $f\in K(3,m)$ and $\lambda_f=\infty$ the item 1) gives the result. On the other hand, if $\lambda_f\neq\infty$ then we get $q=1$ for any $\lambda\in\mathcal{Z}(3,m)$ and we are in the hypothesis of item 2). 
			
			If $f\in K(n,n+1)$ then, as we have verified in the proof of Lemma \ref{T=0}, any $f\in K(n,n+1)$ satisfies $\lambda_f>2m-n$ and item 1) give us the result.
		\end{proof}
		
		In the following theorem, we summarize all the previous results concerning the topological type of the generic polar $P(f)$ for $f\in\mathcal{L}(n,m,\lambda)$.
		
		\begin{teo}\label{theo}
			Let $f=y^n-x^m+x^{p}y^{q}+\sum_{(i,j)\in \mathcal I_{\lambda} }a_{i,j}x^iy^j\in \mathcal{L}(n,m,\lambda)$ for $\lambda\in\mathcal{Z}(n,m)$ such that $\Phi(\lambda)=(p,q)$. Then
			\begin{enumerate}
				\item If $q=1$, that is, $\lambda=2m-(m-p)n$ then $P(f)$ is topologically equivalent to $y^{n-1}-x^p$.
				\item If $\lambda >2m-n$ then $P(f)$ is topologically equivalent to $y^{n-1}-x^{m-1}$.
				\item If $\lambda < 2m-n$ and $q>1$ then the Newton polygon of $P(f)$ is equal to the Newton polygon of the generic polar of \[y^n-x^m+x^{p}y^{q}+\sum_{(i,j)\in T_{\lambda}\cap \,{\small {\supp} (f)} }a_{i,j}x^iy^j.\]
    \noindent Moreover, if $T_{\lambda}=\emptyset$  then  the topological type of $P(f)$ is given by Proposition \ref{polar-special}.
			\end{enumerate}
		\end{teo}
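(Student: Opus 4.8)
The plan is to read this theorem as the bookkeeping statement that packages Lemma \ref{T=0}, Theorem \ref{prop-triang1}, Proposition \ref{polar-special} and the Corollary into a single trichotomy, so the proof reduces to matching each hypothesis to the result that disposes of it. First I would check that the three cases genuinely partition $\mathcal{Z}(n,m)$: writing $\lambda=(q+1)m-(m-p)n$ from $\Phi(\lambda)=(p,q)$, the case $q=1$ gives $\lambda=2m-(m-p)n\le 2m-2n<2m-n$ (using $p\le m-2$), so $\{q=1\}$ and $\{\lambda>2m-n\}$ are disjoint; moreover $2m-n\notin\mathcal{Z}(n,m)$ because $(2m-n)+n=2m\in\langle n,m\rangle$, so no invariant equals $2m-n$. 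Hence $\{q=1\}$, $\{\lambda<2m-n,\ q>1\}$ and $\{\lambda>2m-n\}$ exhaust $\mathcal{Z}(n,m)$.

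For item 1 the hypothesis $q=1$ gives $T_{\lambda}=\emptyset$ by item 2 of Lemma \ref{T=0}, so Theorem \ref{prop-triang1} yields $\mathcal{NP}(P(f))=\mathcal{NP}(E_{\lambda})$ with $E_{\lambda}=\{(0,n-1),(m-1,0),(p,0)\}$; since $p<m-1$ the point $(m-1,0)$ is redundant and the compact boundary is the single segment from $(0,n-1)$ to $(p,0)$, whence Proposition \ref{Oka} gives $P(f)\equiv y^{n-1}-x^{p}$. This is precisely item 2 of the Corollary, which I would cite directly. For item 2 the hypothesis $\lambda>2m-n$ forces $q>1$ (by the partition check) and $T_{\lambda}=\emptyset$ by item 1 of Lemma \ref{T=0}, so again $\mathcal{NP}(P(f))=\mathcal{NP}(E_{\lambda})$; the point $(p,q-1)$ then lies on or above $l_{N,M}$, the Newton polygon has a single compact edge along $l_{N,M}$, and item 2 of Proposition \ref{polar-special} gives $P(f)\equiv y^{n-1}-x^{m-1}$, which is item 1 of the Corollary.

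Item 3 is then nothing but the statement of Theorem \ref{prop-triang1}: for $\lambda<2m-n$ and $q>1$ the Newton polygon of $P(f)$ coincides with that of the generic polar of the truncation of $f$ to $T_{\lambda}\cap\supp(f)$, and when $T_{\lambda}=\emptyset$ this reduces to $\mathcal{NP}(E_{\lambda})$, whose topological type is recorded in Proposition \ref{polar-special}. So the three cases require no new machinery beyond citing what precedes.

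The only genuinely non-formal point --- the \emph{main (if mild) obstacle} --- is the geometric claim used in item 2 that $\lambda>2m-n$ really forces $(p,q-1)$ onto or above $l_{N,M}$, i.e.\ $\lambda\ge p+q+m-n$. I would prove this by contradiction: assuming $2m-n<\lambda<p+q+m-n$ and setting $A'=\alpha-2$, $B'=\beta-1$ (so the ranges $2\le\alpha\le n-1$, $\beta\ge 2$ give $0\le A'\le n-3$ and $B'\ge 1$), the two strict inequalities translate into $A'm-B'n\ge 1$ and $A'm-B'n\le A'-B'-1$, which together force $A'\ge B'+2$. But then $A'm-B'n\ge (B'+2)m-B'n=B'(m-n)+2m\ge 3m-n$, while $A'-B'-1\le (n-3)-1-1=n-5$, so $3m-n\le n-5$ and hence $m<n$, contradicting $n<m$. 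This settles the placement in item 2 and completes the trichotomy.
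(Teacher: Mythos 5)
Your proposal is correct and matches the paper's (implicit) treatment: Theorem \ref{theo} is stated there as a summary with no separate proof, and the intended argument is exactly your assembly of Lemma \ref{T=0}, Theorem \ref{prop-triang1}, Proposition \ref{polar-special} and the Corollary, together with the observation that the three hypotheses exhaust $\mathcal{Z}(n,m)$. Your closing verification that $\lambda>2m-n$ forces $\lambda\ge p+q+m-n$ (so that $(p,q-1)$ lies on or above $l_{N,M}$) is a correct proof of a fact the paper asserts without justification in the Corollary, so it is a welcome addition rather than a deviation.
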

		
		\begin{ejemplo}\label{example}
			
			In the sequel we apply our results to describe the topological type for plane branches in $K(5,12)$, that is, $n=5$ and $m=12$. 
			
			\noindent The set of possible finite Zariski invariants in this equisingularity class  is (see the right side of Figure \ref{fig:triangle2en1})
			\[
			\mathcal{Z}(5,12)=\{13,14,16,18,21,23,26,28,33,38\}.
			\] 
			
			Since $\Phi(14)=(10,1)$, by item 1 of Theorem \ref{theo}, for any $f\in\mathcal{L}(5,12,14)$ the generic polar $P(f)$ is topologically equivalent to $y^4-x^{10}=(y^2-x^5)(y^2+x^5)$, that is, $P(f)$ has two equisingular branches with semigroup $\langle 2,5\rangle$ and the intersection multiplicity of them is $10$.
			
			By item 2 of Theorem \ref{theo}, for any $f\in\mathcal{L}(5,12,\lambda)$ with $\lambda>19=2m-n$, that is $\lambda\in\{21,23,26,28,33,38\}$, the generic polar $P(f)$ is topologically equivalent to $y^4-x^{11}$. So, $P(f)$ is irreducible with semigroup $\langle 4,11\rangle$.
			
			After the description of $T_{\lambda}$ in (\ref{regiont1}) we get $T_{16}=\emptyset$.  Since $\Phi(16)=(8,2)$ and $\lambda=16<17=8+2-5+12=p+q-n+m$, according to item 1 of Proposition \ref{polar-special}, the generic polar for any $f\in\mathcal{L}(5,12,16)$ has a branch $P_1$ with semigroup $\langle 3,8\rangle$ and a smooth branch $Q_1$ with $\textup{I}(P_1,Q_1)=8$.
			
			For $\lambda=18$ we also get $T_{18}=\emptyset$. Since $\Phi(18)=(6,3)$ and $\lambda=18>16=6+3-5+12=p+q-n+m$ then, by item 2 of Proposition \ref{polar-special} the generic polar for any $f\in\mathcal{L}(5,12,18)$ is irreducible with semigroup $\langle 4,11\rangle$.
			
			Consider now $\lambda=13$. We get $\Phi(13)=(5,3)$ and $T_{13}=\{(10,1),(8,2)\}$ (see Figure \ref{regiont1}). So, by item 3 of Theorem \ref{theo}, for any $f\in\mathcal{L}(5,12,13)$ the Newton polygon of $P(f)$ is equal to the Newton polygon of the generic polar of $y^5-x^{12}+x^{5}y^{3}+a_{10,1}x^{10}y+a_{8,2}x^8y^2$. We have the following possibilities:
			\begin{itemize}
				\item If $a_{10,1}=a_{8,2}=0$, then $T_{13}\cap\supp (f)=\emptyset$. Since $\lambda=13<15=p+q-n+m$, the item 1 of Proposition \ref{polar-special} allows us to conclude that the generic polar $P(f)$ has one branch $P_1$ with semigroup $\langle 2,5\rangle$,  two  branches $Q_1$ and $Q_2$ with semigroup $\langle 1,3\rangle=\mathbb{N}$ with $\textup{I}(Q_1,Q_2)=3$ and $\textup{I}(P_1,Q_i)=5$ for $i=1,2$.
				\item If $a_{10,1}\neq 0$ then $\mathcal{ND}(P(f))=\mathcal{ND}(\{(0,4),(0,10),(5,2)\})$. For $a_{10,1}\neq 9/20$ the generic polar has two equisingular branches $Q_1$ and $Q_2$ with semigroup $\langle 2,5\rangle$ and $\textup{I}(Q_1,Q_2)=10$. For $a_{10,1}=9/20$ the generic polar is degenerate and it corresponds to a branch with semigroup $\langle 4,10,21\rangle$. In both cases, we get the topological type by the computation of the first terms of the Puiseux parametrizations of $P(f)$.
				\item If $a_{10,1}=0$ and $a_{8,2}\neq 0$ then the Newton diagram of $P(f)$ is \[\mathcal{ND}(\{(0,4),(5,2),(8,1),(11,0\})\] and computing the Puiseux parametrizations of $P(f)$ we conclude that the topological type of 
				$P(f)$ is equal to the case $a_{10,1}=a_{8,2}=0$.
			\end{itemize}  
		\end{ejemplo}
		
		In \cite{Hefez-Hernandes-Iglesias2017} is presented the description of the all possible topological type of the polar generic curve for any plane curve in $K(5,12)$. The method employed consists in, fixing an analytical invariant denoted by $\Lambda$ and for each associated normal form of $\Lambda$ given by a Puiseux parametrization, considering the equation of each normal formal to analyze the possible topological type of the generic polar curve. That result is in line with our study when considering  $f\in\mathcal{L}(n,m,\lambda)$, but differ for the description of particular value of parameter since the parameters in the parametrization are related by are not the same of parameters in the expression $f\in\mathbb{C}\{x,y\}$. Notice that for $f\in\mathcal{L}(5,12,\lambda)$, except for the case $\lambda=13$, we have the description of the topological type of $P(f)$ directly by $n, m$ and $\lambda$, that is can be obtained independently how the curve is presented, by Puiseux parametrization or by an equation.

		\vspace{0.5cm}
		
		\noindent {Evelia Rosa \sc  Garc\'{\i}a Barroso}\\
		Departamento de Matem\'aticas, Estad\'{\i}stica e I.O.\\
		IMAULL. Universidad de La Laguna.\\
		Apartado de Correos 456.
		38200 La Laguna, Tenerife, Espa\~na 
		\\ORCID ID: 0000-0001-7575-2619
		
		\noindent {ergarcia@ull.es}
		\vspace{0.3cm}
		
		\noindent {Marcelo Escudeiro \sc  Hernandes}\\
		Departamento de Matem\'atica. \\
		Universidade Estadual de Maring\'a. \\
		Avenida Colombo 5790. Maring\'a-PR 87020-900.
		Brazil. 
		\\ORCID ID: 0000-0003-0441-8503
		
		\noindent {mehernandes@uem.br}
		\vspace{0.3cm}
		
		\noindent {Mauro Fernando \sc Hern\'andez Iglesias}\\
		Pontificia Universidad Cat\'olica del Per\'u. \\
		Av. Universitaria 1801, San Miguel 15088, Per\'u. 
		\\ ORCID ID: 0000-0003-0026-157X
		
		\noindent {mhernandezi@pucp.pe}

	\end{document}